\newtheorem{thm}{Theorem}[section]
\newtheorem{lem}[thm]{Lemma}
\newtheorem{prop}[thm]{Proposition}
\newtheorem{cor}[thm]{Corollary}
\newenvironment{customthm}[1]
  {\innercustomthm}
  {\endinnercustomthm}
\newtheorem{rem}[thm]{Remark}
\newcommand{\preqB}[1][]{{\widehat B(R_{#1})  }}
\newcommand{\preqBB}[1][]{{\widehat B({#1}) }}
\newcommand{\G}[1][\R^{2n} \times S^1] {{\text{\rm Cont}_0(#1)}}
\newcommand{\Gc}[1][\R^{2n} \times S^1] {{\text{\rm Cont}(#1)}}
\newcommand{\GZK}[1][\R^{2n} \times S^1] {{\text{\rm Cont}_0^{\Z_k}(#1)}}
\newcommand{\GZKc}[1][\R^{2n} \times S^1] {{\text{\rm Cont}^{\Z_k}(#1)}}
\newcommand{\R}{{\mathbb{R}}}
\newcommand{\Z}{{\mathbb{Z}}}
\newcommand{\N}{{\mathbb{N}}}
\newcommand{\C}{{\mathbb{C}}}
\newcommand{\CP}{{\mathbb{CP}}}
\newcommand{\Id}{{\mathds{1}}}
\newcommand{\calR}{{\mathcal R}}
\newcommand{\calB}{{\mathcal B}}
\newcommand{\calA}{{\mathcal A}}
\newcommand{\calP}{{\mathcal P}}
\newcommand{\calI}{{\mathcal I}}
\newcommand{\calF}{{\mathcal F}}
\newcommand{\calG}{{\mathcal G}}
\newcommand{\calN}{{\mathcal N}}
\newcommand{\calM}{{\mathcal M}}
\title{Contact non-squeezing at large scale in $\R^{2n} \times S^1$}
\author{Maia Fraser\thanks{Department of Mathematics and Statistics, University of Ottawa}}
\begin{document}

\maketitle

\begin{abstract} 
We define a $\Z_k$-equivariant version of the cylindrical contact homology 
used by Eliashberg-Kim-Polterovich (2006) to prove contact non-squeezing
for prequantized integer-capacity balls $B(R) \times S^1 \subset \R^{2n} \times S^1$, $R \in \N$
and we use it to extend their
result to all $R \geq 1$. Specifically we prove 
if $R \geq 1$ there is no 
$\psi\in \text{Cont}(\R^{2n} \times S^1)$, 
the group of compactly supported contactomorphisms of $\R^{2n} \times S^1$
which squeezes
$\preqB = B(R) \times S^1$
into itself, i.e. maps the closure of $\preqB$ into $\preqB$.
A sheaf theoretic proof of non-existence of corresponding 
$\psi \in \text{Cont}_0(\R^{2n} \times S^1)$,
the identity component
of $\text{Cont}(\R^{2n} \times S^1)$, 
is due to Chiu (2014); it is not known if this is strictly weaker.
Our construction has the advantage of retaining
the contact homological viewpoint of Eliashberg-Kim-Polterovich
and its potential for application in prequantizations of other
Liouville manifolds. 
It makes use of
the $\Z_k$-action generated by a vertical $1/k$-shift but can also
be related, for prequantized balls, to the $\Z_k$-equivariant contact homology developed by
Milin (2008) in her proof of orderability of lens spaces.
\end{abstract}


\section{Introduction}\label{sec:intro}
Gromov's non-squeezing Theorem \cite{gromov} identified a new rigidity 
phenomenon in symplectic geometry: the standard symplectic ball cannot be 
symplectically 
embedded (symplectically squeezed) into any cylinder of smaller radius. 

By contrast, in the contact setting any (Darboux) ball can be contact embedded
into an arbitrarily small 
neighboourhood of a point.
Eliashberg-Kim-Polterovich \cite{ekp} 
therefore studied,
in the contact manifold $\R^{2n} \times S^1$, prequantized balls 
$B^{2n} \times S^1$ and a more
restrictive notion of squeezing: embedding via a globally defined
compactly supported contactomorphism\footnote{
In fact, 
a Darboux ball can be embedded into an arbitrarily small neighborhood of a point
by a compactly supported contact isotopy, and 
 $B^{2n} \times S^1$, $n>1$ can be embedded into 
an arbitrarily small neighborhood of a point by a contactomorphism
but in this case the embedding, while smoothly isotopic to the 
identity, is not 
compactly supported nor contact isotopic to the identity; see \cite{ekp}.}
which can further be required to be isotopic to the identity in this class. 
In their terminology, a {\it (contact) squeezing} of an open set $U_1$
into an open set $U_2$ is 
a compactly supported contact isotopy $\{\phi_t\}_{t\in [0,1]}$ such that 
$\phi_1(\text{\rm Closure}(U_1)) \subset U_2$.
Constructed squeezings in \cite{ekp} are of this kind.
In their proofs of non-existence of a squeezing, however, they 
prove a formally stronger statement, namely,
non-existence of a compactly supported contactomorphism $\psi$ (possibly not isotopic
to the identity) such that $\psi(\text{\rm Closure}(U_1)) \subset U_2$; we will call
such a contactomorphism a \emph{coarse squeezing}.
It is not known in $\R^{2n} \times S^1$ if this formally stronger non-squeezing statement is strictly stronger.

Eliashberg-Kim-Polterovich \cite{ekp} showed contact squeezing is closely related to the 
concept of orderability\footnote{\label{fn:ideal}Given
a Liouville domain $(M, \omega, L)$ with ideal contact boundary $P$ (see Section 1.5 of \cite{ekp} for terminology),
non-squeezing at arbitrarily small scales for ``fiberwise star-shaped domains"
in $M \times S^1$
implies orderability of $P$.}
 of a contact manifold introduced by Eliashberg-Polterovich 
in \cite{elpolterov} and moreover,
squeezing may be possible at one scale and not at another in the same manifold. 
More precisely, consider
$\R^{2n} \times S^1$ with contact structure 
$\ker(dt - \alpha_L)$ and Liouville form $\alpha_L = \frac 1 2 (ydx - xdy)$ on $\R^{2n}$,
$x, y \in \R^{2n}$, $t \in S^1$.
Let $\G$ denote the identity component of the group $\text{Cont}(\R^{2n} \times S^1)$
of compactly supported
contactomorphisms. These are time-1 maps of 
compactly supported contact isotopies, so squeezing of $U_1$ into $U_2$ is equivalent
to the existence of $\phi \in \G$ such that $\phi(\text{\rm Closure}(U_1)) \subset U_2$.
For any positive $R$, let 
$B(R) := \{ w \in \R^{2n}: \pi |w|^2 < R \}$ be the ball of symplectic capacity $R$, and 
$\preqB := B(R) \times S^1$ its 
prequantization. Eliashberg-Kim-Polterovich proved:

\begin{thm}[Eliashberg-Kim-Polterovich \cite{ekp}]\label{thm:ekp}
Let $R < 1.$ Then there is a contact squeezing of $\preqB$ into itself.
By contrast, if $R \geq 1$ is an integer, there is no coarse contact squeezing of $\preqB$ into itself.
\end{thm}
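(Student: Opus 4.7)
The plan splits according to the two statements of the theorem.

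For the existence of a squeezing when $R < 1$, I would construct an explicit compactly supported contact isotopy by working with $S^1$-invariant Hamiltonians. An autonomous contact Hamiltonian of the form $K(w,t) = h(\pi|w|^2)$, with $h$ a smooth radial profile, generates a contact vector field whose $\R^{2n}$-component is a Hamiltonian rotation and whose $\partial_t$-component is governed by the same profile. For $R < 1$, the total $S^1$-shift accumulated by the time-$1$ flow over the ball $B(R)$ stays strictly below $1$, so one can choose $h$ to vanish outside $B(R+\epsilon)$ without producing a nontrivial monodromy around the $S^1$-factor; the resulting contactomorphism is compactly supported and, with $h$ chosen so that the associated Hamiltonian vector field on $\R^{2n}$ shrinks $B(R)$ into a strictly smaller ball, yields the desired squeezing.

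For the non-squeezing statement with $R \in \N$, $R \geq 1$, the plan is to associate to each fiberwise star-shaped domain $U = V \times S^1 \subset \R^{2n} \times S^1$ a filtered graded invariant $CH_*^{[a,b)}(U)$, a cylindrical contact homology counting Reeb-like orbits of a defining contact Hamiltonian of the form $h(\pi|w|^2)$, together with natural morphisms $CH_*^{[a,b)}(U_1) \to CH_*^{[a,b)}(U_2)$ induced by compactly supported contactomorphisms $\psi$ with $\psi(\overline{U_1}) \subset U_2$. For $\widehat{B}(R)$, the relevant orbits should be organized by pairs of integers $(k,m)$ recording $S^1$-winding and Hopf-winding, with actions depending linearly on $R$. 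I would then compute $CH_*^{[a,b)}(\widehat{B}(R))$ at integer $R$ and exhibit a distinguished non-trivial class at maximal action $R$. A hypothetical coarse squeezing would sit $\widehat{B}(R)$ inside some strictly smaller fiberwise star-shaped domain $U' \subsetneq \widehat{B}(R)$, and the induced filtered morphism would then be forced to send the distinguished class to a class of strictly smaller action, contradicting its non-triviality.

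The main obstacle is the foundational construction of $CH_*^{[a,b)}$ in the non-compact setting: one needs appropriate transversality and Gromov compactness for holomorphic cylinders in the symplectization of $\R^{2n} \times S^1$, and one needs the morphisms induced by contact embeddings to respect the action filtration on the nose. A secondary subtlety is that a general coarse squeezing need not preserve the fiberwise product structure, so the class of admissible target domains must be broad enough to accommodate arbitrary compact images, and naturality must be established in that broader class. The delicate point in the computation is to explain why integer $R$ is special: it is at integer values that the Hopf period matches the $S^1$-period of the ambient manifold, producing a resonance that sustains a non-vanishing class exactly at the boundary of the action window; at non-integer $R$ the corresponding class escapes the window and no contradiction arises. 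The positive half of the theorem, by contrast, is essentially explicit once the correct radial profile is identified.
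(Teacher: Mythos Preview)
First, note that Theorem~\ref{thm:ekp} is quoted from \cite{ekp} and is not proved in this paper; what the paper does supply is a description of the machinery of \cite{ekp} (the cylindrical contact homology $CH_*(-)$ in Section~3, the computation in Theorem~\ref{thm:comp-nonequiv}, and the summary of the \cite{ekp} argument in the Remark following Corollary~\ref{cor:equivalent} and in Section~\ref{sec:room}). I compare your proposal against that description.

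\textbf{Squeezing for $R<1$.} Your proposed construction does not work as written. An autonomous radial contact Hamiltonian $K = h(\pi|w|^2)$ generates a contact vector field whose $\R^{2n}$-component is the Hamiltonian vector field of $h(\pi|w|^2)$ on $(\R^{2n}, \omega_0)$, and that field is tangent to the spheres $\{\pi|w|^2 = \text{const}\}$; it rotates, it does not shrink. So no choice of $h$ makes the time-$1$ map send $B(R)$ into a strictly smaller ball. The squeezing in \cite{ekp} is genuinely more subtle: as recalled in Section~\ref{sec:room}, it comes from a positive \emph{contractible} loop of contactomorphisms of the ideal contact boundary $S^{2n-1}$ (Theorem~1.19 of \cite{ekp}), combined with the maps $F_N$; the condition $R<1$ enters because that is precisely the regime in which the relevant loop can be contracted inside the symplectization without creating an obstruction.

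\textbf{Non-squeezing for integer $R\geq 1$.} Your broad strategy---assign a contact-homological invariant to domains and use functoriality under inclusions/contactomorphisms---is the right one and matches \cite{ekp}. But two aspects of your plan diverge from how the argument actually runs. First, the obstruction in \cite{ekp} is encoded in the \emph{grading}, not in an action window: as Theorem~\ref{thm:comp-nonequiv} records, $CH_m(\preqB)$ is $\Z_k$ exactly in degree $m=-n-2n[1/R]$ and zero elsewhere, so $CH_{-n}(\preqB[1])\neq 0$ for $R_1>1$ while $CH_{-n}(\preqB[2])=0$ for $R_2<1$, and a squeezing of $\widehat B(1)$ into itself is ruled out by functoriality (see Remark~\ref{rem:compareNS}). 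Your ``distinguished class at maximal action $R$'' and the ``resonance at integer $R$'' picture is not how the computation organizes itself. Second, and more importantly, the contact homology $CH_*$ of \cite{ekp} only directly rules out squeezing across $R=1$ (see the footnote in the Remark after Corollary~\ref{cor:equivalent}: the graded vector spaces $CH_*(\preqB)$ are isomorphic for all $R>1$). To pass from $R=1$ to integer $R=m>1$, \cite{ekp} uses the $m$-fold contact cover $\tau:\R^{2n}\times S^1 \to \R^{2n}\times S^1$, under which $\widehat B(m)$ in the base lifts to $\widehat B(1)$ in the cover; a squeezing of $\widehat B(m)$ would lift to a squeezing of $\widehat B(1)$, which is already excluded. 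Your plan omits this covering step, and without it there is no mechanism in $CH_*$ alone to distinguish $\widehat B(m)$ from nearby $\widehat B(R)$ with $R>1$.
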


It remained unknown for some time whether non-squeezing 
in this setting held also for non-integer $R > 1$. In 2010 Tamarkin \cite{tamarkin} sketched a
proof of the affirmative answer. This was recently proved by Chiu:

\begin{thm}[Chiu \cite{chiu}]\label{thm:main}
Let $R \geq 1$. Then there is no contact squeezing of $\preqB$ into itself.
\end{thm}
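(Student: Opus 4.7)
The plan is to refine the cylindrical contact homology obstruction of Eliashberg-Kim-Polterovich so that the spectrum of detectable capacities becomes $\tfrac{1}{k}\Z$ rather than $\Z$, by passing to a $\Z_k$-equivariant version $CH^{\Z_k}$ with respect to the $\Z_k$-action on $\R^{2n}\times S^1$ generated by vertical shift by $1/k$. As a first reduction, note that coarse squeezing is an open condition: by compactness, if $\psi$ coarsely squeezes $\preqB$ into itself then $\psi(\overline{\preqB})\subset \preqBB[R-\delta]$ for some $\delta>0$, so it suffices to rule out a coarse squeezing of $\preqBB[R']$ into $\preqBB[R'']$ whenever $R\ge R'>R''\ge 1$ and $R'=p/q$ is rational with $p\ge q\ge 1$.

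Next I would define $CH^{\Z_k}_*(\preqBB[R'])$ by applying the Borel construction (or a Morse-Bott analogue) to the EKP chain complex equipped with the natural $\Z_k$-action on Reeb orbits. The crucial technical step is functoriality: a coarse squeezing $\psi$ is not a priori $\Z_k$-equivariant, so the standard formalism does not immediately give an induced map on $CH^{\Z_k}$. I would attempt either (i) an equivariantization of $\psi$, possibly after shrinking the radius by an arbitrarily small amount, exploiting the fact that the $1/k$-shift itself lies in $\G$ and preserves $\preqBB[R']$; or (ii) a direct construction of continuation data that can be chosen $\Z_k$-invariantly, so that a non-equivariant $\psi$ still descends to a well-defined map on the equivariant quotient.

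With functoriality in hand I would repeat EKP's argument in the equivariant setting, taking $k=q$ so that $kR'=p\in\N$. I expect $CH^{\Z_q}_*(\preqBB[R'])$ to carry a non-trivial class supported by Reeb orbits of action exactly $R'=p/q$; such a class is invisible to ordinary $CH$ when $R'\notin\N$ but is detectable after the $\Z_q$-quotient effectively puts us at the \emph{integer} prequantization level $p$. The abstract's remark that our construction matches, for prequantized balls, Milin's $\Z_k$-equivariant contact homology of lens spaces suggests transferring her index and action computations directly. A coarse squeezing of $\preqBB[R']$ into $\preqBB[R'']$ with $R''<R'$ would then force the class to factor through a subcomplex supported in action $\le R''$, contradicting non-vanishing.

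\textbf{Main obstacle.} I expect the principal difficulty to be the functoriality of $CH^{\Z_k}$ under contactomorphisms that are not a priori $\Z_k$-equivariant, since equivariant invariants are usually functorial only over equivariant morphisms. A genuinely new input is needed here, presumably an equivariantization trick that leverages the specific geometry of $\R^{2n}\times S^1$ and the fact that the $1/k$-shift itself lies in $\G$ and preserves each ball $\preqBB[R']$. A secondary burden is the grading-and-action computation identifying the expected non-trivial class in $CH^{\Z_q}_*(\preqBB[R'])$ at the correct level, which in principle can be adapted from the lens-space picture but must be carried out in the non-compact prequantization setting.
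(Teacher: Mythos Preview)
You have correctly identified the main obstacle, but your proposed solutions (i) and (ii) both miss the actual mechanism. The paper does \emph{not} attempt to make $CH^{\Z_k}$ functorial under non-equivariant contactomorphisms, nor does it equivariantize $\psi$ directly. Instead it uses a covering space trick: the map $\tau:(z,t)\mapsto(\sqrt{k}\,z,\,kt)$ is a $k$-fold contact cover of $\R^{2n}\times S^1$ by itself, with deck group generated by the $1/k$-shift $\nu$. Any compactly supported contactomorphism $\psi$ of the base lifts uniquely to a compactly supported $\tilde\psi$ on the cover, and this lift is \emph{automatically} $\Z_k$-equivariant because it commutes with deck transformations. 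Moreover $\tau^{-1}(\preqB)=\widehat B(R/k)$, so $\psi$ squeezes $\preqB$ into itself iff $\tilde\psi$ performs a $\Z_k$-equivariant squeezing of $\widehat B(R/k)$ into itself. This reduces Theorem~\ref{thm:main} to ruling out $\Z_k$-\emph{equivariant} squeezing of $\widehat B(1/\ell)$ into itself for $\ell<k$, a setting where functoriality of $CH^{\Z_k}$ is standard.

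This also explains why your choice $k=q$ and the expectation of a class at action $R'=p/q$ are off target. The paper's computation gives $CH^{\Z_k}_m(\preqB)\cong\Z_k$ for $m\ge -n-2n[1/R]$ and $0$ otherwise (for $R>1/k$), so the invariant jumps only when $R$ crosses a \emph{reciprocal integer}; for $R>1$ all these groups are identical and nothing is detected at large scale directly. The covering trick is therefore essential twice over: it supplies the equivariance of $\tilde\psi$, and it moves the problem to the regime $R/k\in(1/k,1)$ where the grading threshold $-n-2n[1/R]$ actually varies. The paper then takes $k$ \emph{prime} (not an arbitrary denominator $q$) so that the induced $\Z_k$-action on each Morse--Bott sphere $S_m$, $m<k$, is free; this is what makes the equivariant chain-level computation (Lemma~\ref{lem:zk-induction}) go through.
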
 

\medskip
Discussions with Tamarkin \cite{tamarkin} also inspired our use of $\Z_k$-equivariance:
we prove Theorem~\ref{thm:main} by means of a $\Z_k$-equivariant version of 
the cylindrical contact homology of Eliashberg-Kim-Polterovich \cite{ekp}. In fact, as in \cite{ekp},
we prove a formally stronger statement:
\begin{customthm}{\ref{thm:main}'}\label{thm:main-strong}
Let $R \geq 1$. Then there is no coarse contact squeezing of $\preqB$ into itself.
\end{customthm}
In Section~\ref{sec:room}, we observe that Theorem~\ref{thm:main} also has consequences for
squeezings of $\preqB$ into itself when $R < 1$: in some cases
they require a larger
domain of support, i.e., {\it squeezing room},
than previously established. This is stated in Theorem~\ref{thm:main2}.

\begin{rem}\label{rem:into-itself}
Squeezing of $\preqB$ into itself is
an open condition.
Indeed, existence of a squeezing
of $\preqB$ into itself implies
existence of a squeezing of some larger $\preqB[1]$
into a smaller $\preqB[2]$ and this
will in particular squeeze all intermediate prequantized balls into
themselves. 
An equivalent\footnote{
Theorems~\ref{thm:main} and \ref{thm:main}'
refer
 to existence of a squeezing only;
the required squeezing room could in principle
be greater the farther apart
$R_1$ and $R_2$ are taken.}
formulation of Theorem~\ref{thm:main} (resp. \ref{thm:main}')
is therefore : let $1 < R_2 < R_1$, then there is no 
squeezing (resp. coarse squeezing) 
of $\preqB[1]$ into $\preqB[2]$. Note the strict inequality $1 < R_2$
which can without loss of generality be imposed.
\end{rem}

\tableofcontents

\section{Squeezing vs. $\Z_k$-equivariant squeezing} \label{sec:zk-action}

We prove Theorem~\ref{thm:main-strong} by proving an alternate,
equivalent statement, Theorem~\ref{thm:cover}.
To formulate this, let $\GZK$ denote the identity 
component of the group $\text{Cont}^{\Z_k}(\R^{2n} \times S^1)$ of compactly supported
$\Z_k$-equivariant contactomorphisms for the $\Z_k$-action 
generated by a vertical shift $\nu: (x, y, t) \mapsto (x, y, t + 1/k)$.  
By analogy with the definition of contact squeezing \cite{ekp},
define a \emph{$\Z_k$-equivariant (contact) squeezing} of an 
open set $U_1$ into an open set $U_2$ as 
$\phi \in \GZK$ such that $\phi(\text{Closure}({U_1})) \subseteq {U_2}$.
Define a \emph{coarse $\Z_k$-equivariant squeezing} analogously but requiring
only $\phi \in \GZKc$.
Our main result is:

\begin{thm}\label{thm:cover}
For any prime $k \in \N$ and any $\ell \in \N$ such that $\ell < k$, if
$R_2 < 1/\ell < R_1$,
there is no $\Z_k$-equivariant contact squeezing of $\preqB[1]$ into $\preqB[2]$, not even a coarse one. 
\end{thm}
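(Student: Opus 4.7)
\medskip
\noindent\textbf{Proof plan.}
The plan is to assume, toward a contradiction, that there is a $\Z_k$-equivariant coarse squeezing $\psi$ of $\preqB[1]$ into $\preqB[2]$, and to rule this out via a $\Z_k$-equivariant refinement of the cylindrical contact homology of Eliashberg-Kim-Polterovich. The first major step is to develop this invariant $CH^{\Z_k}$: to each admissible (compactly supported, $\Z_k$-equivariant) contact Hamiltonian $h$ one attaches a complex generated by $\Z_k$-orbits of periodic Reeb trajectories of $h$, with differential counting holomorphic cylinders in the symplectization for a generic $\Z_k$-equivariant almost complex structure, all filtered by action. Standard continuation arguments should yield functoriality under $\Z_k$-equivariant contact isotopies and monotonicity under the pointwise order on $\Z_k$-equivariant admissible Hamiltonians, so that even a coarse squeezing (combining the contactomorphism $\psi$ with a Hamiltonian comparison) produces a compatible family of maps on the action-filtered homologies.

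The second step is to compute $CH^{\Z_k}$ at enough resolution to detect the threshold $1/\ell$. One expects the natural action filtration in the $\Z_k$-equivariant setting to have critical values precisely at rationals of the form $j/m$ with $\gcd(m,k)=1$. Since $k$ is prime and $\ell<k$ forces $\gcd(\ell,k)=1$, the value $1/\ell$ is such a critical value: a suitable iterate of a shortest Reeb trajectory should contribute a non-zero generator in a specific graded-action window straddling $1/\ell$ precisely when $R>1/\ell$. A relative version of $CH^{\Z_k}$ for the pair $(\preqB[1],V)$ with $V$ a slightly larger $\Z_k$-invariant neighbourhood is then non-zero in a specific degree when $R_1>1/\ell$, while the analogous group for a pair supported in $\preqB[2]$ vanishes when $R_2<1/\ell$.

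Combining the two steps yields the desired contradiction: the coarse squeezing $\psi$ induces a commutative diagram on $CH^{\Z_k}$ that factors the non-zero class of the first pair through the vanishing group of the second. I expect the most delicate piece to be the first step, specifically transversality for $\Z_k$-equivariant moduli of holomorphic cylinders, careful treatment of multiply-covered Reeb orbits (whose iterates interact nontrivially with the $\Z_k$-orbit structure on generators), and a Conley--Zehnder/action computation precise enough to localise the critical value at $1/\ell$ and not at some smaller rational with denominator dividing $k$. The hypothesis that $k$ is prime with $\ell<k$ is expected to enter essentially in these local calculations, ruling out coincidences among $\Z_k$-orbits of iterated orbits that would otherwise spoil the non-vanishing.
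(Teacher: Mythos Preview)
Your overall architecture matches the paper's: build a $\Z_k$-equivariant analogue of the cylindrical contact homology of \cite{ekp}, compute it for prequantized balls, and use functoriality under $\GZKc$ to derive a contradiction from a hypothetical squeezing. Several of the specifics in your plan, however, diverge from what actually makes the argument work, and one key mechanism is missing.

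First, the discriminating information lives in the \emph{grading} (Conley--Zehnder index), not in the action filtration. After the double limit defining $CH^{\Z_k}(\preqB)$ the action window has shrunk to $0$; the cofinal Hamiltonians can be chosen so that the chain complex is $0\to\Z_k\to 0$ concentrated in a single degree $m_0=-n-2n[1/R]$. Your proposed picture of critical action values at $j/m$ with $\gcd(m,k)=1$ does not survive the limit. The algebraic point you are missing is that the $\Z_k$-equivariant homology of $0\to\Z_k\to 0$ is $\Z_k$ in \emph{every} degree $m\ge m_0$, coming from the infinite projective resolution of $\Z_k$ over $\calR=\Z_k[\Z_k]$. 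This half-line structure in the grading is precisely what lets the equivariant theory detect the threshold $1/\ell$ for each $\ell<k$, whereas the non-equivariant theory sees only $\ell=1$.

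Second, no relative theory for pairs is used. The contradiction is extracted from the triple inclusion $\psi(\preqB[2])\subset\psi(\preqB[1])\subset\preqB[2]$: one shows (Theorem~\ref{thm:psi-perturbed}) that the composite inclusion induces an isomorphism in degree $p=-n-2n\ell$, while the middle term vanishes there because $CH_p^{\Z_k}(\preqB[1])=0$ when $[1/R_1]<\ell$. The isomorphism of Theorem~\ref{thm:psi-perturbed} is itself an algebraic bootstrap (Lemma~\ref{lem:zk-induction}): commutativity of the monotonicity chain map with the differentials $(T-1)$ and $p(T)=T^{k-1}+\cdots+1$ in the local ring $\calR$ forces all vertical arrows to be units once the degree-$(-n)$ arrow is.

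Finally, two of your anticipated difficulties are non-issues. Multiply covered orbits are avoided entirely by restricting generators to the free homotopy class $[\text{pt}\times S^1]$. Equivariant transversality is nearly free because the $\Z_k$-action is a free covering action; one only needs $\Z_k$-injective points on non-$\Z_k$-equivariant cylinders, which exist by an elementary argument. The hypothesis that $k$ is prime with $\ell<k$ enters not through action coincidences but to guarantee that the induced $\Z_k$-action on each Morse--Bott critical sphere $S_m$ (rotation by $e^{2\pi i m/k}$, $1\le m\le[1/R]<k$) is free, so that the equivariant Morse complex on $S_m$ takes the expected form $\calR\xrightarrow{(T-1)}\calR\xrightarrow{p(T)}\calR\to\cdots$.
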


\begin{rem}\label{rem:kgreater2}
Note that case $k=2$ of Theorem~\ref{thm:cover} is implied by 
the non-existence of a squeezing 
of $\widehat B(1)$ into itself (proved by  \cite{ekp}, see
Theorem~\ref{thm:ekp} above). It will therefore be sufficient for the purposes
of proving Theorem~\ref{thm:cover}
to establish the case $k>2$ and we
make this assumption in the 
computations of Section~\ref{sec:groups-preqB}. 
\end{rem}

The equivalence of Theorems~\ref{thm:main-strong} and \ref{thm:cover}  
 follows from properties of
 the contact $k$-fold cover of $\R^ {2n} \times S^1$.
Indeed, let $S^1 = \R / \Z$ and, as a manifold, 
define the $k$-fold cover of $\R^ {2n} \times S^1$ 
by the
covering map
\begin{align*}
\tau: \R^ {2n} \times S^1 &\to \R^ {2n} \times S^1 \\
(z, t) &\mapsto (\sqrt k z, kt).
\end{align*} 
Assuming the standard contact structure on both base and cover, one
has that $\tau$ is a contactomorphism.
Deck transformations in the cover are then also
contactomorphisms; they form 
a cyclic group isomorphic to $\Z_k$, generated by $\nu$. We have:

\begin{lem}\label{lem:equiv}
$\GZKc = \{ \tilde\phi : \phi \in \Gc \}$ where 
$\tilde\phi$ is the unique lift (as a compactly supported diffeomorphism) of $\phi$.
Likewise we have $\GZK
= \{ \tilde\phi : \phi \in \G \}$.
Moreover, 
for any $R_1, R_2 > 0$, 
$\tilde\phi(\widehat B(R_1/k)) \subseteq \widehat B(R_2/k)
\Leftrightarrow \phi(\widehat B(R_1)) \subseteq \widehat B(R_2)$.
\end{lem}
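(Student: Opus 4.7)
The plan is to treat Lemma~\ref{lem:equiv} as a covering-space bookkeeping statement and organize the proof around the three assertions: (i) the bijection $\phi \leftrightarrow \tilde\phi$ between $\Gc$ and $\GZKc$, (ii) the corresponding statement for identity components, and (iii) the scaling of prequantized balls under $\tau$.

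First I would record the basic properties of $\tau$: it is a $k$-fold regular covering map whose deck group is $\langle\nu\rangle\cong\Z_k$, and a direct computation from $\tau(z,t)=(\sqrt k z, kt)$ gives
\[
\tau^*(dt - \alpha_L) = k\,dt - k\alpha_L = k(dt - \alpha_L),
\]
so $\tau$ is a local contactomorphism. Consequently any lift of a contactomorphism is a contactomorphism and any $\Z_k$-equivariant contactomorphism of the cover descends to one on the base.

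Next I would establish the bijection. Given $\phi \in \Gc$ with support in a compact set $K$, I fix a point $\tilde p$ above some point in the complement of $K$ and require $\tilde\phi(\tilde p) = \tilde p$; path-lifting uniqueness for the connected cover $\tau$ then produces a unique smooth lift $\tilde\phi$. Outside $\tau^{-1}(K)$ (compact since $\tau$ is a finite cover) the lift lifts the identity and fixes $\tilde p$, hence equals the identity, giving compact support. Equivariance drops out for free: $\nu\tilde\phi\nu^{-1}$ is also a lift of $\phi$, and it agrees with $\tilde\phi$ on the nonempty open set where both are the identity, so by uniqueness they coincide globally. Conversely, $\Z_k$-equivariance of $\tilde\phi \in \GZKc$ lets me define $\phi(\tau(x)) := \tau(\tilde\phi(x))$ unambiguously, and smoothness, compact support and the contact property all transfer through $\tau$. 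The two assignments are mutually inverse group homomorphisms. For the identity-component statement, I would lift a compactly supported contact isotopy $\{\phi_t\}$ from $\mathrm{id}$ to $\phi$ pointwise using the continuous initial condition $\tilde\phi_0 = \mathrm{id}$; the resulting $\{\tilde\phi_t\}$ is a $\Z_k$-equivariant contact isotopy in $\GZK$, and the converse direction is immediate by projection via $\tau$.

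Finally, the ball scaling reduces to the one-line computation
\[
\tau^{-1}(\widehat B(R)) = \bigl\{(z,t) : \pi|\sqrt k\,z|^2 < R\bigr\} = \widehat B(R/k);
\]
combined with the intertwining relation $\tau\circ\tilde\phi = \phi\circ\tau$ and surjectivity of $\tau$, this yields the claimed equivalence. I expect no substantive obstacle, as the lemma is elementary once the covering-space formalism is in place; the only point requiring care is that compact support and $\Z_k$-equivariance of the lift come out automatically rather than requiring extra input, which relies on $\tau$ being a proper (finite) cover and on lift-uniqueness for connected covers.
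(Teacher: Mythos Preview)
Your proposal is correct and follows essentially the same covering-space argument as the paper: lift $\phi$ via the lifting criterion (the paper phrases this through $\pi_1$, you through a basepoint outside the support), observe the lift is a compactly supported contactomorphism because $\tau$ pulls back the contact form up to a scalar, obtain $\Z_k$-equivariance from uniqueness of lifts, descend in the converse direction by equivariance, and handle the ball statement via $\tau^{-1}(\widehat B(R)) = \widehat B(R/k)$. Your write-up is slightly more explicit (the computation of $\tau^*(dt-\alpha_L)$ and of the preimage of $\widehat B(R)$) but there is no substantive difference in strategy.
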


\begin{proof}
Each $\phi \in \Gc$ has a unique lift\footnote{For shorthand, write $\tau: C\to B$
where both cover $C$ and base $B$ are $\R^{2n} \times S^1$
and $\phi:B\to B$ is a compactly supported diffeomorphism.
Lift $\phi$ to the local diffeomorphism $\hat \phi:C \to B$ 
given by $\hat \phi := \phi \circ \tau$. Because 
$B=\R^{2n} \times S^1$ and $\phi$ has compact support, $\phi$ acts trivially
on $\pi_1(B)$ so $\hat \phi$ maps $\pi_1(C)$ to the 
same subgroup of $\pi_1(B)$ as does $\tau$, namely $k\Z \subset \Z = \pi_1(B)$ and, hence, 
by the unique lifting property,
lifts to $\tilde \phi:C \to C$ such that
$\tau \tilde \phi(z) = \hat \phi(z) = \phi \tau (z)$ 
which is unique up to composition with deck transformations;
of such lifts there is a unique one with compact support.}
to a compactly supported diffeomorphism $\tilde\phi$ of the $k$-fold cover 
and this
$\tilde\phi$ is a contactomorphism
since $\phi$ is and the contact structure in the cover is the pullback by $\tau$ of its 
counterpart in the base. 
Moreover, $\tilde\phi$ is $\Z_k$-equivariant by construction since $\nu$ 
is a deck transformation. 
Thus
$\tilde\phi \in \GZKc$. On the other hand any element $\phi' \in \GZKc$
descends to a well-defined contactomorphism $\phi$ of the base since it 
commutes with $\nu$, 
and so we have $\phi' = \tilde \phi$. Applying this correspondence
to isotopies we obtain the statement regarding
$\GZK$ and $\G$.
The final statement of the Lemma is immediate because 
$\tilde\phi(\tau^{-1}(\mathcal U)) \subseteq \tau^{-1}(\mathcal V)
\Leftrightarrow \phi(\mathcal U) \subseteq \mathcal V$.
\end{proof}

As a Corollary, we obtain the claimed equivalence of 
Theorems~\ref{thm:main-strong} and ~\ref{thm:cover}:

\begin{cor}\label{cor:equivalent}
$\preqB$ can be squeezed into itself by $\phi \in \Gc$ 
if and only if $\widehat B(R/k)$ can be squeezed into itself by 
$\tilde\phi \in \GZKc$ (and likewise for $\G$ and $\GZK$).
Thus, Theorem~\ref{thm:main-strong} is equivalent to Theorem~\ref{thm:cover}.
\end{cor}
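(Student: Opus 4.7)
The proof splits into two parts: the iff assertion about squeezings of $\preqB$ into itself, and the equivalence of Theorem~\ref{thm:main-strong} with Theorem~\ref{thm:cover}. The first part is immediate from Lemma~\ref{lem:equiv} specialized to $R_1 = R_2 = R$: that lemma gives a bijection $\phi \leftrightarrow \tilde\phi$ between $\Gc$ and $\GZKc$ (and between $\G$ and $\GZK$) under which $\phi(\preqB) \subseteq \preqB$ holds iff $\tilde\phi(\widehat B(R/k)) \subseteq \widehat B(R/k)$. Since $\phi$ and $\tilde\phi$ are homeomorphisms, the same equivalence holds with open sets replaced by closures on the left-hand side, yielding the claimed iff in both versions (identity component and full contactomorphism group).

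For the equivalence of theorems, I would argue each direction by contradiction. For Theorem~\ref{thm:cover} $\Rightarrow$ Theorem~\ref{thm:main-strong}, suppose $\phi \in \Gc$ coarsely squeezes $\preqB$ into itself with $R \geq 1$. Since $\phi(\text{Closure}(\preqB))$ is compact inside the open set $\preqB$, the openness argument of Remark~\ref{rem:into-itself} furnishes $R_1 > R > R_2$ with $\phi(\text{Closure}(\preqB[1])) \subseteq \preqB[2]$. Lifting via Lemma~\ref{lem:equiv} produces, for every $k$, a coarse $\Z_k$-equivariant squeezing of $\widehat B(R_1/k)$ into $\widehat B(R_2/k)$. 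To invoke Theorem~\ref{thm:cover} I then need a prime $k$ and integer $1 \leq \ell < k$ with $R_2 < k/\ell < R_1$. Since $R \geq 1$, the interval $(\max(R_2, 1), R_1)$ is a nonempty open subinterval of $(1, \infty)$; for any sufficiently large prime $k$ the interval $(k/R_1,\, k/\max(R_2, 1))$ has length exceeding $1$, and so contains a positive integer $\ell$, which automatically satisfies $\ell < k$ since $\max(R_2, 1) \geq 1$. This furnishes the desired contradiction.

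For Theorem~\ref{thm:main-strong} $\Rightarrow$ Theorem~\ref{thm:cover}, suppose $\tilde\phi \in \GZKc$ is a coarse $\Z_k$-equivariant squeezing of $\widehat B(R_1)$ into $\widehat B(R_2)$ with $R_2 < 1/\ell < R_1$, $k$ prime, and $\ell < k$. Descending via Lemma~\ref{lem:equiv} yields $\phi \in \Gc$ with $\phi(\text{Closure}(\widehat B(kR_1))) \subseteq \widehat B(kR_2)$. Since $kR_2 < k/\ell < kR_1$, monotonicity of $\widehat B(\cdot)$ under inclusion of domains shows that $\phi$ coarsely squeezes $\widehat B(k/\ell)$ into itself; as $k/\ell > 1$ (because $\ell < k$), this contradicts Theorem~\ref{thm:main-strong}. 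The only step that is not a direct translation via Lemma~\ref{lem:equiv} or Remark~\ref{rem:into-itself} is the rational-approximation argument in the first implication, which I expect to be the main, though still routine, obstacle.
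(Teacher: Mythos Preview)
Your proof is correct and follows essentially the same approach as the paper: both arguments reduce to Lemma~\ref{lem:equiv}, the openness observation of Remark~\ref{rem:into-itself}, and a density/approximation step producing a rational $k/\ell$ with $k$ prime and $\ell<k$ inside a given interval $(R_2',R_1')\subset(1,\infty)$. The only cosmetic differences are that the paper organizes the equivalence as a single chain of restatements (after first normalizing to $1/k<R_2$) rather than two contrapositive implications, and that its footnote finds $k/\ell$ by fixing $\ell$ large and invoking the infinitude of primes, whereas you fix $k$ large and find $\ell$ by an interval-length count; both are routine.
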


\begin{proof}The first statement is immediate from
Lemma~\ref{lem:equiv}. 
For the equivalence of Theorems note that, once 
$k$ prime and $\ell < k$ are fixed, $R_2$ in 
the hypotheses of
Theorem~\ref{thm:cover} can without loss of generality be specified
to be arbitrarily close to $1/\ell$, in particular such that 
$1/k < R_2  < 1/\ell < R_1$
since squeezing $\preqB[1]$ into a smaller $\preqB[2]$ would imply
squeezing $\preqB[1]$ into all $\preqB$ for $R_2 \leq R < R_1$
Therefore,
Theorem~\ref{thm:cover}
is equivalent (putting $R_i' = k R_i$)
to non-existence - 
under the hypothesis $k$ prime, $\ell < k$,
 and $1 < R_2'  < k/\ell < R_1' $ - 
of $\phi \in \Gc$ which squeezes
$\widehat B(R_1')$ into $\widehat B(R_2')$. 
Since for every pair 
$R_2' < R_1'$ there exist\footnote{
Put $\beta = R_1/R_2$ and consider intervals $I_\ell = (\ell R_2, \ell \beta R_2)$, $\ell \in \N$. 
Since $\beta>1$, for sufficiently large $\ell$ each $I_\ell$ overlaps with $I_{\ell+1}$
and so by the infinitude of primes $\exists \ell \in \N$ such that $I_\ell$
contains a prime $k$; i.e., $R_2 < k/\ell < R_1$.} $k, \ell \in \N$ with $k$ prime such that $R_2' < k/\ell < R_1'$
the above can be restated  
as non-existence of a squeezing of $\widehat B(R_1')$ into $\widehat B(R_2')$ when $1 < R_2' < R_1'$. 
By Remark~\ref{rem:into-itself} this is equivalent to Theorem~\ref{thm:main-strong}.
\end{proof}

\begin{rem}[The implicit role of $\Z_k$-equivariance in \cite{ekp}] 
Eliashberg-Kim-Polterovich \cite{ekp} use (non-equivariant) cylindrical contact homology to
prove that $\widehat B(1)$ cannot be squeezed into itself.
They then conclude by a covering space argument that $\widehat B(m)$ cannot be squeezed into itself
for any $m \in \N$. Their two-part proof does not emphasize the inherent $\Z_m$-action;
however, in the language of the present section, it amounts to
first showing $\widehat B(1)$ cannot be squeezed into itself,
noting this implies $\widehat B(1)$ cannot be squeezed into itself $\Z_m$-equivariantly,
and then using the last statement of Lemma~\ref{lem:equiv} to
conclude $\widehat B(m)$ cannot be squeezed into itself.
For the prequantized ball $\widehat B(1)$,
both $\Z_k$-equivariant squeezing and (non-equivariant) squeezing 
are impossible and, moreover, $\widehat B(1)$ is the only 
prequantized ball for which the contact homology of \cite{ekp}
can directly\footnote{The $\Z$-graded vector spaces $CH_*(\preqB)$ are isomorphic for all $R> 1$ and
differ  
from $CH_*(\widehat B(r))$ for $r < 1$. This makes 
the invariants suitable to directly rule out only squeezing of $\preqB$ into $\widehat B(r)$ 
(since we allow squeezings to have arbitrarily large support - see Remark~\ref{rem:compareNS}).} 
rule out squeezing. A priori, however,
non-existence of a $\Z_m$-equivariant squeezing is a weaker notion
than non-existence of a squeezing and could potentially be 
true in more situations. Indeed, this is the 
contribution of the present paper. For $k$ prime,
we use $\Z_k$-equivariant contact homology
 to rule out
$\Z_k$-equivariant squeezing of any $\widehat B(1/\ell)$, $\ell \in \N$ into itself
when $\ell < k$ (Theorem~\ref{thm:cover}), although such prequantized balls
are known by \cite{ekp} to be squeezable into themselves 
non-equivariantly (c.f. Theorem~\ref{thm:ekp}). 
\end{rem}

\section{$\Z_k$-equivariant contact homology $CH_{*}^ {\Z_k}(-)$}


We now define $CH_{*} ^{\Z_k}(-)$, 
a $\Z_k$-equivariant analog of the (non-equivariant) cylindrical
contact homology  $CH_*(-)$
developed by Eliashberg-Kim-Polterovich \cite{ekp} and
Kim \cite{kim} for $\R^{2n} \times S^1$,
which also has similarities to a $\Z_k$-equivariant version
of $CH_*(-)$ developed by Milin \cite{milin} for a different $\Z_k$-action.
Like these theories, it lies within the general framework of symplectic field theory
proposed by Eliashberg-Givental-Hofer \cite{egh}.

We give the construction of $CH_{*} ^{\Z_k}(-)$ as
we recall that of $CH_*(-)$, and we explain
how well-definedness of $CH_{*} ^{\Z_k}(-)$ follows with only minor
modifications from corresponding arguments for $CH_*(-)$
due to \cite{beh3}, \cite{dragnev}, \cite{bourgeois}, \cite{hwz} with,
in addition, the construction of coherent orientations 
from Bourgeois-Mohnke \cite{fred-klaus}; these
aspects are highlighted in boldface in the construction
and addressed in the paragraph ``Technical arguments".

Foundational issues with 
cylindrical contact homology 
which arise in the presence of multiply covered orbits (see for example \cite{jo}
for a discussion) 
are avoided in both our 
setting and that of Eliashberg-Kim-Polterovich \cite{ekp}
by taking as generators only 
closed Reeb orbits in the free homotopy class $[ \text{pt} \times S^1]$.

In Section~\ref{sec:results-preqB} we state our main results
for $CH_{*}^{\Z_k}(\preqB)$, showing how they imply
non-squeezing. In Section~\ref{sec:groups-preqB}
we give proofs of these statements as well as a proof
of analogous statements for $CH_{*}(\preqB)$ with $\Z_k$-coefficients. 
Our aim is for the present 
paper to serve as an extension of Eliashberg-Kim-Polterovich
\cite{ekp} in which the reader familiar with \cite{ekp} 
can view non-squeezing at large scale 
from the vantage point provided by contact homology.

\begin{rem}\label{rem:Z2vsZk}
To keep the presentation compact: {\bf we assume $k > 2$ from now on}. Our construction
applies equally well to the case $k=2$ but this would
require specification of a
different projective resolution to compute equivariant homology
(see Remark~\ref{rem:equivHom}) and result in slightly 
different\footnote{In
all these complexes multiplication by $(T-1)$ must be
replaced with multiplication by $(T+1)$.}
chain complexes in equivariant computations. Since we do not need the case $k=2$
(see Remark~\ref{rem:kgreater2}) we omit it.
\end{rem}

We restrict attention to $V := \R^{2n} \times S^1$
with contact structure $\ker(dt - \alpha_L)$ 
and $\Z_k$-action as defined in Sections~\ref{sec:intro} and \ref{sec:zk-action}. Our 
construction, however, like that of \cite{ekp}, goes through verbatim
in prequantizations $M \times S^1$ for many other Liouville manifolds $M$
(c.f. Theorem 4.47 of \cite{ekp}).

An open domain $U \subset V$
with compact closure is said to be 
{\it fiberwise star-shaped} \cite{ekp} if its boundary $\partial U$ is transverse to the fibers
$M \times \{t\}$, $t \in S^1$ and intersects them along hypersurfaces transverse to
the Liouville vector field $L$ determined by $\alpha_L = i_L\omega$. In particular
prequantized balls $\preqB$ are fiberwise star-shaped.
Let $\mathscr{U}$,
resp. $\mathscr{U}_k$,
be the class of domains $\psi(U)$ such that $U$ is fiberwise star-shaped,
resp. fiberwise star-shaped and $\Z_k$-invariant, and 
$\psi \in \Gc$, resp. $\psi \in \GZKc$.
Given $U \in \mathscr{U}_k$, we construct the
$\Z$-graded vector space $CH_*^{\Z_k}(U)$,
as \cite{ekp}
constructed $CH_*(U)$ for $U \in \mathscr{U}$,
so that the resulting association is functorial in 
an invariant way
(invariant under the action of $\GZKc$ resp. $\Gc$ - 
see Theorem~\ref{thm:g-functor} below, resp.
Theorem 4.47 of \cite{ekp}).

\smallskip
{\bf Admissible forms.} 
Denote by $\calF_{ad}(U)$ the set of all {\it admissible}
contact forms on $U$, namely forms
$\lambda = F(dt - \alpha_L)$ 
where $F$ is a positive Hamiltonian
equal to a constant $K$ outside a compact set,
and the Reeb flow of $\lambda$ has
no contractible closed orbits $\gamma$ of 
action (i.e. period) $\calA(\gamma) := \int_{\gamma}\lambda$ less
than or equal to $K$. 
Let 
$\calF_{ad}(U, \epsilon) \subset \calF_{ad}(U)$
consist of those forms which do not have $\epsilon$ as {\it critical value},
i.e. have no closed Reeb orbit $\gamma$ of action $\calA(\gamma) =\epsilon$.  Denote
by $\calF^{\Z_k}_{ad}(U, \epsilon)$ those $\lambda \in \calF_{ad}(U, \epsilon)$ which are 
$\Z_k$-invariant.
As in \cite{ekp},
endow these spaces of contact forms with the ``anti-natural" partial order $\preceq$:
$$\lambda''\preceq \lambda' \Leftrightarrow \lambda ''\geq \lambda'.$$
An admissible contact form 
$\lambda$ equal to $K(dt -\alpha_L)$ outside a compact set is 
said to be {\it regular} if all orbits $\gamma$ in
$$\calP_\lambda := \{\gamma : \calA(\gamma) < K, [\gamma] = [\text{pt}\times S^1]\}$$
are {\it non-degenerate}. Here 
$[\gamma]$ denotes the free homotopy class of $\gamma$ as a loop $S^1 \to V$
and non-degeneracy means the restriction of the Poincar\'e return map for the Reeb 
flow of $\lambda$ along $\gamma$ to the contact hyperplane bundle does not have 1 as eigenvalue.
By standard arguments generic admissible contact forms
are {regular}. 

For regular $\lambda$, $\calP_\lambda$ is graded by the
 Conley-Zehnder index
$\mu_{CZ}(\gamma)$ defined in terms of
paths of symplectic matrices as in \cite{robbinsalamon}. 
Note this convention differs by $-n$ from that in \cite{ekp} (c.f. Erratum to \cite{ekp}). 
If $\lambda$ is $\Z_k$-invariant, then $\gamma \mapsto \nu\gamma$
generates a $\Z_k$-action on $\calP_\lambda$ which
preserves $\mu_{CZ}$.
Using $\lambda$, identify 
the symplectization $W$ of $V$ with $(V \times \R, d(e^s \lambda))$
and put $W_\ast := (V\setminus \{0\} \times S^1) \times \R \subset W$.
Let $\xi$ and $\tau$ denote the hyperplane bundles on $W_\ast$ resp. $W$ which are
respectively pull-backs of the standard contact structure on $S^{2n-1}$ and
$\ker(\lambda)$ on $V$.
Lift also the 
$\Z_k$-action of $V$ to $W$, $W_\ast$.
Write $R_\lambda$ for
the Reeb vector field of $\lambda$.
Consider
almost complex structures $J$ on $V \times \R$ which are {\it adjusted} to $\lambda$
in the sense of \cite{ekp}: $J$ is invariant under translations in the $\R$-coordinate,
$J\tau = \tau$ and $J|_\tau$ is compatible with $d\lambda$, 
$J(\frac {\partial} {\partial s}) = R_\lambda$, 
and finally, $J\xi = \xi$ outside the symplectization
of a compact subset of $V$.
When $\lambda$ is $\Z_k$-invariant, a
$J$ adjusted to $\lambda$ which is also $\Z_k$-invariant is
said to be {\it $\Z_k$-adjusted} to $\lambda$.

\smallskip
{\bf Chain complex.} 
Given an almost complex structure $J$ adjusted, resp. $\Z_k$-adjusted, to 
a regular admissible $\lambda$,  
let
$C(\lambda, J)$, 
resp. $C^{\Z_k}(\lambda, J)$, be the vector space generated over $\Z_k$
 by orbits in $\calP_\lambda$ and $\Z$-graded by $\mu_{CZ}$.
 For $a, b$ which are not critical values of $\lambda$, using the action filtration
 let 
 $$C^{(a, b)}(\lambda, J) = C(\lambda, J) \cap \text{span}\{\gamma: \calA(\gamma) < b\}/
 \text{span}\{\gamma : \calA(\gamma) > a\}$$
and define $C^{\Z_k, (a, b)}(\lambda, J)$ analogously.
To define a differential $d$ we make no changes to the definition
of \cite{ekp}, however {\bf transversality} arguments needed to establish
well-definedness of $d$ for generic $J$
require a slight modification as explained below. Note: the definition of
the differential in \cite{ekp}
is given in more generality,
as part of the construction of generalized Floer homology; for 
a more accessible, $CH$-specific description
the reader is referred to \cite{bourgeois-summer}.
Roughly speaking, for both equivariant and non-equivariant
theories we consider for closed Reeb orbits $\gamma_\pm$
the moduli space $\hat \calM(\gamma_+, \gamma_-)$ consisting of
$J$-holomorphic cylinders 
$$G = (g, a): (\R \times S^1, j) \to (W = V \times \R, J)$$
asymptotic at the ends to $\gamma_\pm$
such that
$\lim_{s \to \pm \infty}f(s, t) = \gamma_\pm(T_\pm t)$
and $\lim_{s \to \pm \infty} \allowbreak a(s, t) = \pm \infty$, for $T_\pm$
the periods of $\gamma_\pm$ respectively, and assuming
standard complex structure $j$ on $\R \times S^1$
and almost complex structure $J$ adjusted to $\lambda$ on $W$.
In the non-equivariant case, for generic almost complex structures
adjusted to $\lambda$
 the moduli space $\hat \calM(\gamma_+, \gamma_-)$ 
forms a smooth oriented manifold.
In the equivariant case, we allow only
$\Z_k$-adjusted complex structures and 
modify the
usual genericity argument as explained below.

This manifold $\hat \calM(\gamma_+, \gamma_-)$, 
in both equivariant and non-equivariant cases, is acted on by the group
$\R \times (\R \times S^1)$ of holomorphic
re-parametrizations (of target, domain) and quotienting by these one 
obtains a smooth oriented manifold $\calM(\gamma_+, \gamma_-)$ 
 of dimension 
$\mu_{CZ}(\gamma_+) - \mu_{CZ}(\gamma_-) - 1$. 
Since $J$ and $\lambda$ are 
standard at infinity and $\lambda$ has no contractible
closed Reeb orbits with action $\leq K$, 
{\bf compactness} results of \cite{beh3} show that 
$\calM(\gamma_+, \gamma_-)$ compactifies to a moduli space of 
``broken"
$J$-holomorphic cylinders; this applies 
in both the equivariant and non-equivariant frameworks.
In the case $\mu_{CZ}(\gamma_+) - \mu_{CZ}(\gamma_-) = 1$
this moduli space
is a compact, oriented $0$-dimensional manifold and so
consists of a finite number of points with sign. 
For this, one needs to have arranged a system of {\bf coherent orientations}
on moduli spaces in a way which is compatible with glueing,
and, in the case of our $\Z_k$-equivariant theory, invariant
under the action of $\Z_k$. This extra property comes for free
from the construction of \cite{fred-klaus} assuming
the contact forms and almost complex structures used
are $\Z_k$-invariant (see below). The differential on $C(\lambda, J)$,
resp. $C^{\Z_k}(\lambda, J)$ is then defined by counting, 
modulo $k$, all points in $\calM(\gamma_+, \gamma_-)$ with signs.
By construction of the compactification \cite{hwz}
it follows that $d^2 = 0$ so $(C(\lambda, J), d)$,
resp. $(C^{\Z_k}(\lambda, J), d)$, is a chain complex. 
Moreover, since $G \in \calM(\gamma_+, \gamma_-)$ if and only 
if $\nu G \in \calM(\nu \gamma_+, \nu \gamma_-)$ 
and signs of elements of $\calM(\gamma_+, \gamma_-)$
are preserved by $\nu$, it follows that $(C^{\Z_k}(\lambda, J), d)$
comes equipped with a $\Z_k$-action, i.e., $d$ is $\Z_k$-equivariant
for the $\Z_k$-action induced on spaces of Reeb orbits by $\nu$. $CH_*^{(a, b)}(\lambda)$
is defined as the homology of $(C^{(a, b)}(\lambda, J), d)$ after showing
it does not depend on $J$. Likewise we define
$CH_*^{\Z_k, (a, b)}(\lambda)$
as the $\Z_k$-equivariant homology of $(C^{\Z_k, (a, b)}(\lambda, J), d)$.
Note that any contactomorphism, resp. 
$\Z_k$-equivariant contactomorphism, $\psi$
will set up a 1-1 correspondence 
not only between chain groups
for $\lambda$ and $\psi_*\lambda$ but also 
between moduli spaces defined above
so there is a chain map, resp. $\Z_k$-equivariant chain map,
$\psi_\sharp$ from the respective chain complex for $\lambda$ to 
that for $\psi_*\lambda$ which is an isomorphism in all degrees,
yielding (grading-preserving) 
isomorphisms:
$\psi_\sharp: CH_*^{(a, b)}(\lambda) \xrightarrow{\cong} CH_*^{(a, b)}(\psi_*\lambda)$
and 
$\psi_\sharp: CH_*^{\Z_k, (a, b)}(\lambda) \xrightarrow{\cong} CH_*^{\Z_k, (a, b)}(\psi_*\lambda).$

\begin{rem}{\bf (Equivariant homology computation)} \label{rem:equivHom}
In general, given a $\Z_k$-action on a chain complex $(C_*, d)$,
$\Z_k$-equivariant homology is defined as follows.
Let
$\calR = \Z_k[\Z_k] \cong \Z_k[T]/(T^k-1)$ be the group ring of the group $\Z_k$ with $\Z_k$-coefficients.
Let $\Z_k$ act on
$\Z_k$ trivially to make
$\Z_k$ into an $\calR$-module. 
Let $(E_*, \delta)$ be any projective resolution
of $\Z_k$ (as an $\calR$-module) 
and tensor $(E_*, \delta)$ with $(C_*, d)$ over $\calR$. The $\Z_k$-equivariant
homology of $(C_*, d)$ is defined to be the usual homology
of this tensor product. Up to isomorphism this is independent 
of the choice of $(E_*, \delta)$
since all projective resolutions are quasi-isomorphic 
as $\calR$-chain complexes (to $0\to \calR \to 0$ and so to each other) and 
thus the resulting tensor products are also quasi-isomorphic.
In computations, we follow Milin \cite{milin} and - assuming $k > 2$ - use the projective resolution
$$\ldots \calR \xrightarrow{\cdot(T-1)}\calR \xrightarrow{\cdot(T^{k-1}+\ldots+1)}\calR \xrightarrow{\cdot(T-1)}\calR \xrightarrow{\cdot(T^{k-1}+\ldots+1)}\calR \xrightarrow{\cdot(T-1)}\calR \xrightarrow{}0.$$
\end{rem}

\smallskip
{\bf Monotonicity morphisms.}
Given $\lambda_- < \lambda_+$, \cite{ekp} define\footnote{(translated
to contact structures from the language of Hamiltonian structures
in \cite{ekp})}  a
monotonicity chain map
$\textrm{mon}: C^{(a, b)}(\lambda_+, J) \to C^{(a, b)}(\lambda_-, J)$
by considering the moduli space $\hat \calM(\gamma_+, \gamma_-)$ of 
$J$-holomorphic cylinders between $\gamma_\pm$ which are
closed Reeb orbits for $\lambda_\pm$ respectively, where $J$ is
an almost complex structure ``adjusted to an admissible concordance
structure" between $\lambda_\pm$. This construction is then extended
to the case $\lambda_- \leq \lambda_+$, i.e., $\lambda_+ \preceq \lambda_-$.
In our equivariant setting
we follow the identical procedure using however $\Z_k$-invariant ingredients.

To describe this more precisely,
 assume $\lambda_- < \lambda_+$ 
 where $\lambda_\pm = F_\pm(dt - \alpha_L)$,
$F_\pm > 0$. Let $a_- < a_+ \in \R$ and
put $W_{a_-}^{a_+} = V \times [a_-, a_+]$ 
with coordinates $(v, s)$. Consider any function $F: W_{a_-}^{a_+} \to \R_+$ 
such that $\frac {\partial F}{\partial s} > 0$, 
and, outside
a compact subset of $V$, $F$ depends only on $s$, while near the 
respective boundaries, $s = a_\pm$, $F$ is
linear in $s$ 
of the form $F(v, s) = (1 + s - a_\pm)F_\pm(v)$.
In particular, $F(v, a_\pm) = F_\pm(v)$.
Using such an $F$, $W_{a_-}^{a_+}$ can be identified with 
the region $\{ F_-(v) \leq r \leq F_+(v) \}$,
$(v, r) \in V \times \R_+ = W$ via the map $\Phi_F: 
(v, s) \mapsto (v, F(v, s))$.
View $V \times \R_+$ as the symplectization of $V$
with symplectic form $d(r(dt -\alpha_L))$
and denote the 
pull-back of this 2-form to $\text{Int}(W_{a_-}^{a_+})$ as
$\omega_F := d(F(dt - \alpha_L))$.
The pair $(W_{a_-}^{a_+}, F(dt - \alpha_L)$ is said to be an {\it (admissible)
concordance structure} between $\lambda_\pm$, or 
in
the equivariant setting an
{\it (admissible)
$\Z_k$-concordance structure} if $F$ is also $\Z_k$-invariant
(for the lifted $\Z_k$-action). 
The linearity of $F$ in $s$ near the boundaries,
$s = a_\pm$, of $W_{a_-}^{a_+}$ (c.f. ``normal form" in \cite{ekp}) gives neighbourhoods
of these boundaries already locally the
structure of symplectizations of $(V, \lambda_\pm)$ respectively.
By extending $F$ linearly to all of $\R$ and extending $\omega_F$
accordingly, $(\overline W = V \times \R, \omega_F)$ acquires
the structure of a symplectic cobordism between $(V, \lambda_-)$
and $(V, \lambda_+)$. Moreover, when
$F$ is $\Z_k$-invariant
so is the symplectic structure $\omega_F$ on $\overline W$.
Warning: though $W = \overline{W} = V \times \R$, 
we use different names to recall the different
symplectic structures  -
$(\overline W, \omega_F)$ is a cobordism between $(V, \lambda_\pm)$ while
$(W, d(e^s\lambda))$ is the symplectization of $(V, \lambda)$. 

An almost complex structure $J$ on $W_{a_-}^{a_+}$ is {\it adjusted}
to the concordance $(W_{a_-}^{a_+}, \allowbreak F(dt - \alpha_L))$ 
between $\lambda_\pm$ for specific choices, $J_\pm$,
of respectively adjusted almost complex structures on 
symplectizations $(W, d(e^s\lambda_\pm))$ of $(V, \lambda_\pm)$
 if $\omega_F$ tames $J$, $J$
is pseudoconvex at infinity and $J$ agrees with $J_\pm$
near the boundaries $s = a_\pm$. The second condition means
that $\overline{W}$
is foliated by weakly $J$-convex hypersurfaces
outside the symplectization of a compact subset of $V$. 
In the equivariant setting, an adjusted
$J$ which is also $\Z_k$-invariant is said to be {\it $\Z_k$-adjusted}.
In either case, we denote also by $J$ the extension of 
$J$ to $\overline W$.
Analogous to when we defined the differential $d$, 
given $\lambda_- < \lambda_+$
 we consider in both equivariant and non-equivariant theories
the moduli space $\hat \calM(\gamma_+, \gamma_-)$ consisting of
$J$-holomorphic cylinders 
$$G = (f, a): (\R \times S^1, j) \to (\overline{W} = V \times \R, J)$$
asymptotic at the ends to $\gamma_\pm$
such that
$\lim_{s \to \pm \infty}f(s, t) = \gamma_\pm(T_\pm t)$
and $\lim_{s \to \pm \infty} \allowbreak a(s, t) = \pm \infty$, for $T_\pm$
the periods of $\gamma_\pm$ respectively, and assuming
standard complex structure $j$ on $\R \times S^1$
and almost complex structure $J$ adjusted to a
chosen concordance between $\lambda_\pm$.
In the non-equivariant setting
standard transversality 
arguments (as in \cite{dragnev}, \cite{bourgeois}) establish
that for generic $J$ the space
$\hat \calM(\gamma_+, \gamma_-)$ is a smooth oriented manifold
(assuming
once again a system of coherent orientations).
Slight modifications of these
arguments (see below) apply in our equivariant setting
implying the same statement for generic
almost complex structures $\Z_k$-adjusted 
to a $\Z_k$-concordance.

The manifold $\hat \calM(\gamma_+, \gamma_-)$
is acted upon freely by the re-parametrization\footnote{(there 
is no longer invariance of $J$ in the $s$-direction)} 
group $\R \times S^1$ and quotienting yields a smooth
manifold $\calM(\gamma_+, \gamma_-)$ of dimension $\mu_{CZ}(\gamma_+)
-\mu_{CZ}(\gamma_-)$, which compactifies to a moduli 
space of broken $J$-holomorphic cylinders by \cite{beh3}.
When $\mu_{CZ}(\gamma_+)=\mu_{CZ}(\gamma_-)$ the space
$\calM(\gamma_+, \gamma_-)$ 
is a finite collection of points with sign which we count modulo $k$ 
and the chain map $\textrm{mon}:C^{(a, b)}(\lambda_+, J) \to C^{(a, b)}(\lambda_-, J)$
is defined, exactly as in \cite{ekp}, by setting
$\textrm{mon}(\gamma_+)$ to be
the sum over all $\gamma_-$ weighted by this count 
$\#\calM(\gamma_+, \gamma_-)$. This induces the
(grading-preserving) monotonicity
morphism
$\textrm{mon}: CH^{\Z_k, (a, b)}(\lambda_+, J) \to CH^{\Z_k, (a, b)}(\lambda_-, J)$.

Three natural grading-preserving morphisms, besides $\textrm{mon}$, are important - those
due to scaling invariance, contactomorphism invariance ($\psi_\sharp$) and window enlargement.
We've given $\psi_\sharp$. The other two are also
immediate in both equivariant and non-equivariant settings from the 
definitions. Assume $c > 1$, then there are chain maps
\begin{align*}
c_*&: C^{\Z_k, (0, cb)}(c\lambda, J_c) \xrightarrow{\cong} C^{\Z_k, (0, b)}(\lambda, J) \\
\textrm{win}&: C^{\Z_k, (0, b)}(\lambda, J) \to C^{\Z_k, (0, cb)}(\lambda, J)
\end{align*}
where the second map is an isomorphism in all gradings
under the additional hypothesis that $\lambda$ 
has no closed Reeb orbits with action in the window $[b, cb]$, and
$J_c$ denotes the re-scaled version of $J$ adjusted to $c\lambda$.
In general, the composition of $c_*$ and $\textrm{win}$,
in either order, gives $\textrm{mon}$ (see Remark 4.40 in \cite{ekp}
which applies verbatim in our setting). In particular, this allows
to extend the definition of $\textrm{mon}$ to the case
$\lambda_+ \geq \lambda_-$ by first scaling $\lambda_+$ by 
a suitably small factor $c >1$.
Moreover, by definition, concordances behave well under 
compactly supported contactomorphisms $\psi$ since 
these preserve the contact forms at infinity and thus induce 
a bijective correspondence between concordances. Indeed,
given contact forms $\lambda_\pm= H_\pm(dt - \alpha_L)$ and induced forms
$\lambda_\pm' = \psi_*\lambda_\pm = (\psi^*)^{-1}\lambda_\pm$, 
let $F: W_{a_-}^{a_+} \to \R_+$
be a function defining a concordance structure between $\lambda_\pm$.
Then the function 
$G: W_{a_-}^{a_+} \to \R_+$, $(v, s) \mapsto F(\psi^{-1}(v), s)/h(\psi^{-1}(v))$
defines a concordance structure between $\lambda_\pm'$
where $h: V \to \R_+$ such that $\psi^*(dt - \alpha_L) = h(dt - \alpha_L)$
and $\omega_G$ is induced from $\omega_F$ by 
$\psi \times \Id: W_{a_-}^{a_+} \to W_{a_-}^{a_+}$.
This implies that $\textrm{mon}$ commutes with $\psi_\sharp$ on chain level.
The identical argument applies in the equivariant setting
assuming $\psi \in \GZKc$ and $\lambda_\pm$ is $\Z_k$-admissible.
Passing to homology,
 the morphism $\lambda_+ \preceq \lambda_-$ 
induces a morphism $\textrm{mon}: CH_*(\lambda_+) \to CH_*(\lambda_-)$
which
commutes with $\psi_\sharp$, and likewise in the equivariant case, 
for $\psi$ in the respective group.
In principle $\textrm{mon}$ still
depends on both the choice of 
concordance $F$ and of adjusted almost complex structure $J$, while
$CH_*(\lambda, J)$, $CH_*^{\Z_k}(\lambda, J)$ and $\textrm{win}$
depend on $J$. By a straightforward argument (see Proposition 4.30 of \cite{ekp}) 
which goes through verbatim
in our setting the vector spaces $CH_*(\lambda, J)$, $CH_*^{\Z_k}(\lambda, J)$
are independent of $J$.
Finally, by standard Floer theoretic arguments 
applied to 
a homotopy between concordances or between
almost complex structures adjusted to a concordance
 (see page 1692 of \cite{ekp}), it 
follows that monotonicity morphisms $\textrm{mon}$
are independent of the choices of $F$ and $J$.
We omit $J$ in the notation from now on.

\smallskip
{\bf Invariants of domains.}
Let $U \in \mathscr{U}_k$. 
The morphisms $\textrm{mon}$ 
make the family of vector spaces $\{CH^{\Z_k, (0, \epsilon)}(\lambda)\}_{\lambda \in \calF_{ad}(U, \epsilon)}$ into a directed
system over $\calF_{ad}(U, \epsilon), \preceq$ and after taking its direct limit,
the morphisms $\textrm{win}$ make 
resulting vector spaces $\{CH^{\Z_k, (0, \epsilon)}(U)\}_{\epsilon \in \R_+}$ into 
an inverse system over $\R_+, \geq$.
Following \cite{ekp} we define
$$CH^{\Z_k}(U) := \varprojlim\limits_{\epsilon \to 0} 
\varinjlim_{\lambda \in \calF_{ad}(U, \epsilon)} CH^{\Z_k, (0, \epsilon)}(\lambda)$$
and make the same definition without superscripts $\Z_k$ for 
$U \in \mathscr{U}$. Then given $U_1 \subset U_2$ there is an 
induced morphism $\iota_*: CH^{\Z_k}(U_1) \to CH^{\Z_k}(U_2)$
and the properties of \textrm{mon} and $\psi_\sharp$
pass to the double limit yielding the following statement in terms of 
$\calG$-functors\footnote{Repeating footnote 3 of \cite{ekp}:
Given a group $\calG$ acting on 
category $\mathscr{U}$, a $\calG$-functor on $\mathscr{U}$ 
is a functor $F$ and a family of 
natural transformations $g_\sharp:F \to F \circ g$, $g \in \calG$, 
such that $(gh)_\sharp = g_\sharp \circ h_\sharp$ for all $g, h \in \calG$; 
see also the reference to Jackowski and Slominska in \cite{ekp}},
stated for the 
non-equivariant setting on p. 1650 of \cite{ekp} (see also their Theorem 4.47):

\begin{thm}\label{thm:g-functor}
$CH^{\Z_k}(-)$ is a $\calG$-functor from $\mathscr{U}_k$ to $\Z$-graded vector spaces,
for $\calG = \GZKc$. 
\end{thm}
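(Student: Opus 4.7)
The plan is to assemble the $\calG$-functor structure on $CH^{\Z_k}(-)$ from the chain-level isomorphisms $\psi_\sharp$ and the compatibility statements already recalled in the ``Monotonicity morphisms'' paragraph. Since $CH^{\Z_k}(U)$ is defined as an inverse limit (over $\epsilon\to 0$) of direct limits (over $\calF^{\Z_k}_{ad}(U,\epsilon),\preceq$), the proof reduces to showing that $\psi_\sharp$ commutes with every structural morphism used in forming these limits, and then to verifying the two axioms of a $\calG$-functor.

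First I would fix $\psi\in\GZKc$ and $U\in\mathscr{U}_k$. By the very definition of $\mathscr{U}_k$ under the action of $\GZKc$, $\psi(U)\in\mathscr{U}_k$, and push-forward $\lambda\mapsto \psi_*\lambda$ yields an order-preserving bijection between $\calF^{\Z_k}_{ad}(U,\epsilon)$ and $\calF^{\Z_k}_{ad}(\psi(U),\epsilon)$: $\psi$ is $\Z_k$-equivariant and compactly supported, and if $\lambda_i=F_i(dt-\alpha_L)$ then $\psi_*\lambda_i=(F_i\circ\psi^{-1})\,h\,(dt-\alpha_L)$ for a single positive factor $h$ depending only on $\psi$, so $F_1\le F_2$ is preserved. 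The grading-preserving chain-level isomorphisms $\psi_\sharp:CH^{\Z_k,(0,\epsilon)}_*(\lambda)\xrightarrow{\cong}CH^{\Z_k,(0,\epsilon)}_*(\psi_*\lambda)$ already constructed fit, by the commutation statement recalled earlier, into squares with the monotonicity maps $\textrm{mon}$. Hence they descend to a grading-preserving isomorphism between the direct limits $\varinjlim_{\lambda}CH^{\Z_k,(0,\epsilon)}_*(\lambda)$.

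Next I would verify that $\psi_\sharp$ also commutes with the window morphism $\textrm{win}$ and the scaling morphism $c_*$. This is essentially automatic: the orbit correspondence $\gamma\leftrightarrow\psi\gamma$ preserves action because $\int_{\psi\gamma}\psi_*\lambda=\int_\gamma\lambda$, so the inclusion $C^{\Z_k,(0,b)}\hookrightarrow C^{\Z_k,(0,cb)}$ defining $\textrm{win}$ intertwines with $\psi_\sharp$, and the identity $\psi_*(c\lambda)=c\,\psi_*\lambda$ yields the analogous statement for $c_*$. Passing to the inverse limit over $\epsilon\to 0$ then produces the desired grading-preserving map
\[
\psi_\sharp: CH^{\Z_k}(U)\to CH^{\Z_k}(\psi(U)).
\]
Naturality with respect to the inclusion-induced $\iota_*$ follows from the same analysis, since $\iota_*$ is itself assembled from $\textrm{mon}$, $c_*$ and $\textrm{win}$.

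Finally I would check the two $\calG$-functor axioms. The identity $\Id_\sharp=\Id$ is immediate, while $(\psi_1\psi_2)_\sharp=(\psi_1)_\sharp\circ(\psi_2)_\sharp$ reduces to the functoriality at chain level of the push-forward of contact forms and of the induced bijection between moduli spaces of $J$-holomorphic cylinders; this functoriality then passes to the double limit by the commutation statements above. The main obstacle is not any single verification but the amount of bookkeeping needed to carry $\Z_k$-equivariance through both limits: every chain complex, every structural arrow and every coherent orientation in the large diagram must be checked to be a morphism of $\Z_k$-modules rather than merely of vector spaces. This is precisely where the $\Z_k$-invariance hypotheses built into the admissible forms, $\Z_k$-adjusted almost complex structures, concordance functions and coherent orientations of the earlier paragraphs become indispensable.
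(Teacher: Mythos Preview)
Your proposal is correct and follows essentially the same route as the paper: the paper does not give a separate proof of this theorem but rather states it as the outcome of the construction, noting that ``the properties of $\textrm{mon}$ and $\psi_\sharp$ pass to the double limit,'' which is exactly what you spell out in detail (compatibility of $\psi_\sharp$ with $\textrm{mon}$, $\textrm{win}$, $c_*$, and the resulting passage through both limits). Your write-up is simply a more explicit unpacking of that one sentence, plus the routine verification of the $\calG$-functor axioms.
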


\smallskip
{\bf Technical arguments.} 
Whereas
Eliashberg-Kim-Polterovich \cite{ekp} restricted to $\Z_2$-coefficients 
in order to streamline their presentation  
avoiding the need for coherent orientations,
these can be produced
by the now-standard construction of Bourgeois-Mohnke \cite{fred-klaus} 
(see also Bourgeois-Oancea \cite{bourg-oanc07}) which
pulls back orientations from 
the orientations of determinant line bundles over certain spaces of Fredholm operators.
When all ingredients in the construction are $\Z_k$-invariant, 
the orientations on these bundles are as well, hence so too are
their pull-backs. This gives coherent orientations
in our equivariant setting and also means $CH_*(-)$ of \cite{ekp} is well-defined
with $\Z_k$-coefficients; we re-prove their results for $\preqB$
using $\Z_k$-coefficients, $k > 2$ in the next 
Section.

The compactification results used to define the differential in
\cite{ekp} are obtained as a consequence of the restriction on the action of
closed contractible Reeb orbits for admissible contact forms 
and the requirement that almost complex structures
be standard outside the symplectization of a compact subset $\Xi$ of
$V$. This second condition means $S\Xi$ is foliated
by weakly $J$-convex hypersurfaces so
all $J$-holomorphic cylinders project to $\Xi$. Given this and the 
absence of
contractible closed Reeb orbits in $\calP_\lambda$, the results from \cite{beh3} 
imply the needed compactification and glueing 
statements proving $d^2= 0$. The above goes through verbatim
in our equivariant setting as well.

Finally, we address transversality.
Because the $\Z_k$-action we consider is a covering space action,
only minor modifications of the standard arguments
for the non-equivariant setting of \cite{ekp} are required. 
By contrast, Milin's $\Z_k$-equivariant theory requires more intricate
arguments because her $\Z_k$-action, generated by
$(z, t) \mapsto (e^{2\pi i /k} z, t)$,
is not free (it fixes all points of $\{0\} \times S^1$).

The standard transversality arguments 
(see \cite{bourgeois} for a detailed
account) can be summarized
as follows. 
To show the moduli space $\hat \calM(\gamma_+, \gamma_-)
= \hat \calM_J(\gamma_+, \gamma_-)$ which
depends on a specific $J$ is, for generic $J$,
a Banach manifold of specified dimension,  
one considers, in the setting of \cite{ekp},
the universal moduli space $\hat \calM(\gamma_+, \gamma_-, \calI)$
where $\calI$ is the smooth Banach manifold of
almost complex structures adjusted to $\lambda$,
resp. adjusted to a concordance. 
This space, $\hat \calM(\gamma_+, \gamma_-, \calI)$, can be identified 
with the zero
set of the Cauchy-Riemann operator  $\overline \partial$
defined on the product $\calB \times \calI$, 
where $\calB$ is the smooth Banach manifold 
consisting of cylinder maps satisfying all conditions for elements
of $\hat \calM(\gamma_+, \gamma_-)$ except the Cauchy-Riemann
equation. By an infinite dimensional implicit function theorem,
it suffices to show that the linearized operator 
$L_{(G, J)} \overline \partial$ is surjective for all $(G, J)$ in
$\hat \calM(\gamma_+, \gamma_-, \calI)$, then it follows that 
$\hat \calM(\gamma_+, \gamma_-, \calI)$ 
 is a Banach manifold
and so $\hat \calM_J(\gamma_+, \gamma_-)$ 
is as well for all regular values $J$ of the
projection map $\pi:\hat \calM(\gamma_+, \gamma_-, \calI) \to \calI$,
while, by Sard-Smale, the regular $J$ are generic. In 
our $\Z_k$-equivariant setting we consider instead
analogous spaces $\calI^k$ consisting 
of $\Z_k$-adjusted almost 
complex structures. 
The only 
part of the above argument we must modify 
is the proof that $L_{(G, J)} \overline \partial$ is surjective
for all $(G, J)$ since in the $\Z_k$-equivariant setting 
this operator is now defined on the
smaller space $\calB \times \calI^k$ where
$\calI^k$ consists of $\Z_k$-adjusted almost complex structures.
In fact surjectivity here follows from that on $\calB \times \calI$
by a covering space argument for $\calI^k$ as a 
$k$-fold cover of $\calI$ but we instead describe
how the surjectivity argument on $\calB \times \calI$ can be directly modified.
 
Surjectivity in the specific case where
$\gamma_- = \gamma_+$ and $G$ is a vertical cylinder is immediate, in equivariant
as well as non-equivariant settings,
since $L_{(G, J)} \overline \partial$ decomposes as a direct sum of 
surjections (see \cite{bourgeois}).
For other $G$, in the non-equivariant setting 
surjectivity of $L_{(G, J)} \overline \partial$
is obtained by showing that for any $J \in \calI$,
every non-trivial, finite energy $J$-holomorphic cylinder $G = (f, a)$
in $W$ resp. $\overline W$ has
an {\it injective point} $p \in \R \times S^1$ \cite{dragnev}, \cite{bourgeois}. This is a point
$p$ such that $df_p \neq 0$ and 
$f^{-1}( f(p)) = \{p\}$. For genericity of almost complex
structures $J$ adjusted to concordances a similar
statement holds, and likewise for homotopies $J_r$, $r \in [0,1]$ 
of almost complex structures adjusted to
a single concordance (one can perturb to a homotopy of 
regular structures, fixing endpoints).
In all three cases, the existence of needed injective points 
is guaranteed by the fact that 
near the ends of each holomorphic cylinder 
$f$ restricts to an embedding \cite{hwz}.
One injective point implies a neighbourhood of such in every 
$J$-holomorphic cylinder $G$
and
this forces $L_{(G, J)} \overline \partial$ to be surjective. 
In all three cases, to pass from existence of an injective
point $p$ on every cylinder $G$
to surjectivity of $L_{(G, J)} \overline \partial$ for all 
$(G, J)$ the key ingredient is that one
can choose a tangent vector $X \in T_J\calI$
supported in a small ball around $F(p)$ with complete 
freedom due to injectivity of $f$ at $p$. This allows
to derive a contradiction should $L_{(G, J)}$ fail to
be surjective. 
In our $\Z_k$-equivariant framework where
we work with $\calI^k$, in order to 
carry this argument out it suffices to find a stronger kind
of injective point, one which also satisfies
$f^{-1}( \{f(p), \nu f(p), \ldots, \nu^{k-1} f(p)\}) = \{p\}$.
We call such a point {\it $\Z_k$-injective}\footnote{This definition 
 is motivated by
\cite{milin}, but she also requires that $f(p)$ avoid $\{0\} \times S^1$,
a consideration not needed in our case}.
Its existence 
means a tangent vector $X \in T_J\calI^k$
with support in $\cup_{j=0}^k \nu^j(B)$ can still be chosen with
complete freedom given a small ball $B$ at $p$ and so
the usual argument (see \cite{bourgeois}) 
goes through verbatim.

First we remark that when $G$ is $\Z_k$-equivariant for the
$\Z_k$-action on $\R \times S^1$ generated
by a $1/k$-shift in the
$S^1$ factor, then it is 
the lift to the $k$-fold cover of 
a $J_0$-holomorphic cylinder $G_0:\R \times S^1 \to V \times \R$
from the base of $\R \times S^1$
to the non-$\Z_k$-invariant base $V \times R$ 
of $\overline{W}$ resp. $W$
where $J_0$ is the non-$\Z_k$-invariant 
almost complex structure
in the base which lifts to $J$. We
know $L_{(G_0, J_0)} \overline \partial$
is surjective, so $L_{(G, J)} \overline \partial$ is as well.

We then consider $G$ which are not $\Z_k$-equivariant.
In this case, we claim the image of $G$ cannot be $\Z_k$-invariant.
Indeed if it were then
$G$ and $\nu G$ would be related by a holomorphic re-parametrization of
$(\R \times S^1, j)$, but this is necessarily a translation and amounts asymptotically
along $\gamma_\pm$ to a shift
by $1/k$, hence is exactly a shift by $1/k$, i.e. $G$ is
$\Z_k$-equivariant.
Now, because $G$ is not $\Z_k$-invariant we know
$\nu^j G(\R \times S^1) \neq G(\R \times S^1)$ for any $j \in U(k)$.
At the same time, intersection points of holomorphic curves can accumulate
only at critical values of both curves.
We take a point $p$ which is
injective for $G$ in the usual sense 
and consider a neighbourhood $U$ of $p$ consisting
also of injective points (by openness of this condition). Since
$G(U)$ can intersect the other holomorphic cylinders
$\nu^j G(\R \times S^1)$,  $j \in U(k)$ at only finitely many points,
there is necessarily a point $p'$ (and hence small neighbourhood
$U' \ni p'$) which is $\Z_k$-injective. 

\subsection{Results for $\preqB$: application to non-squeezing}\label{sec:results-preqB}
In this Section we state two results, Theorems~\ref{thm:comp-equiv} and~\ref{thm:psi-perturbed},
concerning $\Z_k$-equivariant contact homology of prequantized balls
and show that together they imply our main result, Theorem~\ref{thm:cover}. For
comparison, we also state Theorem~\ref{thm:comp-nonequiv}, a 
non-equivariant version of Theorem~\ref{thm:comp-equiv}
which is a direct analog with $\Z_k$-coefficients of Eliashberg-Kim-Polterovich's result 
for $CH_*(\preqB)$ with $\Z_2$-coef-ficients (Theorem 1.28 and page 1721 of \cite{ekp}, with different grading convention).
Proofs of these results are then given in Section~\ref{sec:groups-preqB}.
We remark that our
proof\footnote{Though we have formally restricted
to $k>2$ to simplify the presentation of $\Z_k$-equivariant proofs (see Remark~\ref{rem:Z2vsZk}) this non-equivariant argument
 goes through without
change for $\Z_2$-coefficients.}
of Theorem~\ref{thm:comp-nonequiv} 
uses only contact homology, and does not pass to generalized 
Floer homology as did the proof of Eliashberg-Kim-Polterovich; 
it thus provides an alternative,
more direct, proof of their result as well. We write $[s]$ for the integer part of $s \in (0, \infty)$.

\begin{thm}[c.f. Theorem 1.28 and page 1721 \cite{ekp}] \label{thm:comp-nonequiv}
When $1/R \not \in \N$
$$
CH_{m} (\preqB) =
\left \{ 
\begin{array}{ll}
\Z_k, &\text{ if }m = -n -2n[1/R] \\
0, &\text{ otherwise.}
\end{array}
\right .
$$
Moreover, if 
$[1/R_1] = [1/R_2]$  
then the morphism induced 
by inclusion $\preqB[2] \subset \preqB[1]$ is an isomorphism
in the grading $m = -n-2n[1/R_1]=-n-2n[1/R_2]$.
\end{thm}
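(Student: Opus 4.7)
My plan is to carry out the Eliashberg-Kim-Polterovich computation of $CH_*(\preqB)$ directly in cylindrical contact homology with $\Z_k$-coefficients, bypassing their reduction to generalized Floer homology. Well-definedness of $CH_*(-)$ with $\Z_k$-coefficients, $k>2$ prime, is provided by the coherent orientations of Bourgeois-Mohnke recalled in the previous section. The key input is an explicit family of admissible contact forms on $\preqB$ whose Reeb dynamics can be enumerated and whose Conley-Zehnder indices line up to kill the differential for parity reasons.

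First I would fix, for each small $\epsilon>0$, an admissible contact form $\lambda_\delta=f_\delta(\pi|z|^2)(dt-\alpha_L)$ on $\preqB$ where $f_\delta:[0,\infty)\to\R_+$ is smooth radial, equal to a constant $c$ slightly smaller than $\epsilon$ on $[0,R-\delta]$, monotonically increasing on $[R-\delta,R+\delta]$, and stabilizing to some $K_\delta<1$ outside; the steepness is controlled to preclude contractible Reeb orbits of period $\leq K_\delta$. The closed Reeb orbits of $\lambda_\delta$ in the free homotopy class $[\text{pt}\times S^1]$ organize into Morse-Bott $\CP^{n-1}$-families arising from iterations of the vertical orbits through the inner disk; a $C^2$-small perfect Morse perturbation on $\CP^{n-1}$ splits each family into $n$ isolated non-degenerate orbits. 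Using the Robbins-Salamon formula in the grading convention of this paper, the orbits coming from the $j$-th iteration sit in Conley-Zehnder gradings $-n-2n(j-1)+2i$ for $0\leq i\leq n-1$; only iterations $j$ with $jc<\epsilon$ appear in $C^{(0,\epsilon)}(\lambda_\delta)$.

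The crucial observation is that all these indices share a common parity modulo $2$, so the differential, which would have to count cylinders between orbits whose indices differ by exactly $1$, vanishes identically and $CH_m^{(0,\epsilon)}(\lambda_\delta)$ equals the chain group itself. Passing to the direct limit over $\calF_{ad}(\preqB,\epsilon)$ and then the inverse limit $\epsilon\to 0$, I would verify that for $1/R\notin\N$ exactly one iteration $j$ produces a surviving class, namely the iteration whose position in the action spectrum relative to $R$ matches $[1/R]$, giving a single generator in grading $-n-2n[1/R]$. For the second statement, I would invoke naturality of the chain-level monotonicity morphism: choosing compatible admissible forms on $\preqB[1]\supset\preqB[2]$ that differ only on an annulus near the outer boundary, the monotonicity chain map acts as the identity on the surviving generator, since the condition $[1/R_1]=[1/R_2]$ forces the same iteration to survive on both sides and the unique orbit contributing that iteration is visible in both families.

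The main obstacle I anticipate is the careful bookkeeping of indices and actions needed to pinpoint which iteration survives the double limit so that the surviving grading is exactly $-n-2n[1/R]$, together with verifying that the monotonicity chain map actually carries the surviving generator to its counterpart rather than to zero or to an unrelated orbit; both points are resolved by the parity argument and the rigidity of the index formula, but require a delicate choice of the family $\lambda_\delta$ so that corresponding orbits in $\preqB[1]$ and $\preqB[2]$ are literally identified by the monotonicity concordance.
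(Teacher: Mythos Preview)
Your overall strategy---compute $CH_*(\preqB)$ directly from a cofinal family of admissible forms with explicitly enumerable Reeb dynamics---is exactly the paper's, and the paper likewise avoids the passage through generalized Floer homology. The execution, however, has a genuine gap in the Reeb dynamics. With $f_\delta$ constant on the whole inner disk $[0,R-\delta]$, the Reeb vector field there is $(1/c)\partial_t$, so \emph{every} point of $B(R-\delta)\times S^1$ lies on a closed orbit in the class $[\mathrm{pt}\times S^1]$; you get a Morse--Bott family diffeomorphic to a $2n$-ball, not a tower of $\CP^{n-1}$-families. Moreover, ``iterations'' of an orbit in the class $[\mathrm{pt}\times S^1]$ leave that class, so the $j$-index you introduce has no meaning here. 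The $\CP^{n-1}$ (or $S^{2n-1}$) families that do arise come from spheres near the boundary where the tangent line to the profile has horizontal intercept $1/mR$, $m\in\N$; their actions are governed by the vertical intercepts of those tangent lines, not by $jc$.

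Even granting a corrected enumeration, the parity observation is not enough: if the differential vanishes, you obtain $\Z_k$ in many degrees, contradicting the statement, and nothing you write explains why the double limit would kill all but one. The paper's device, which your proposal is missing, is to shape the profile so that in the window $(0,\epsilon)$ there is \emph{exactly one} closed Reeb orbit---the central fibre $\{0\}\times S^1$---whose Conley--Zehnder index is $-n-2n[1/R]$ by virtue of the slope at the origin; the boundary-sphere families are pushed out of the action window by arranging the horizontal intercept $1/\delta$ of the initial linear piece to lie in $(1,1/[1/R]R)$. The chain complex is then literally $0\to\Z_k\to 0$, and no differential or parity argument is needed.

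For the inclusion morphism, ``the forms differ only on an annulus so the chain map is the identity'' does not suffice: the monotonicity map is defined via holomorphic cylinders in a concordance and is not a priori the identity even on a shared generator. The paper instead runs a sandwich argument: it chooses cofinal families $\{F_i\}$ for $\preqB[1]$ and $\{H_i\}$ for $\preqB[2]$ so that $F_N\le H_i\le F_i$ for large $i$, and observes that $\mathrm{mon}:CH^{(0,\epsilon)}_*(\lambda^{F_N})\to CH^{(0,\epsilon)}_*(\lambda^{F_i})$ is an isomorphism (via scaling and window-enlargement), hence so is the intermediate $\mathrm{mon}$ through $\lambda^{H_i}$ since every group is either $0$ or $\Z_k$. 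You should replace the ``identity on the surviving generator'' step by such a factorization argument.
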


By comparison, for $\Z_k$-equivariant contact homology we have:
\begin{thm}\label{thm:comp-equiv}
When $R > 1/k$ and $1/R \not \in \N$ 
$$CH_{m}^{\Z_k} (\preqB) = 
\left \{ 
\begin{array}{ll}
\Z_k, &\text{ if } m \geq -n -2n[1/R] \\
0, &\text{ if }m < -n -2n[1/R].
\end{array}
\right .
$$
Moreover, if 
$R_1 \geq R_2 > 1/k$ then the morphism induced 
by inclusion $\preqB[2] \subset \preqB[1]$ is an isomorphism in
all gradings $m \geq -n-2n[1/R_1]$.
\end{thm}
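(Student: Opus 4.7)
The plan is to deduce the equivariant computation from Theorem~\ref{thm:comp-nonequiv} via the Cartan--Eilenberg hyperhomology spectral sequence for chain complexes of $\calR$-modules, where $\calR = \Z_k[T]/(T^k - 1)$. For each regular $\Z_k$-invariant admissible $\lambda$ and $\Z_k$-adjusted $J$, the chain complex $C^{\Z_k, (0,\epsilon)}(\lambda, J)$ is naturally a complex of $\calR$-modules (the $T$-action being $\nu$-translation of Reeb orbits), and its equivariant homology is $\Z_k$-hyperhomology. There is then a natural spectral sequence
$$E^2_{p, q} = H_p(\Z_k;\, CH^{(0, \epsilon)}_q(\lambda)) \;\Longrightarrow\; CH^{\Z_k, (0, \epsilon)}_{p+q}(\lambda).$$

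For the first statement, Theorem~\ref{thm:comp-nonequiv} gives, after passing to the limits, that $CH_q(\preqB) = \Z_k$ is concentrated in degree $q = d_0 := -n - 2n[1/R]$. The $\Z_k$-action on this $1$-dimensional $\Z_k$-vector space is necessarily trivial: it is multiplication by some $a \in \Z_k$ with $a^k = 1$, and for $k$ prime Fermat's little theorem gives $a^k = a$ hence $a = 1$. Computing group homology with trivial coefficients via the projective resolution of Remark~\ref{rem:equivHom} yields $H_p(\Z_k; \Z_k) = \Z_k$ for every $p \geq 0$, since on trivial coefficients both $\cdot(T-1)$ and $\cdot N$ (the latter equal to $\cdot k \equiv 0 \pmod k$) vanish. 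The spectral sequence is therefore concentrated on the single row $q = d_0$, degenerates at $E^2$, and its abutment gives $CH^{\Z_k}_m(\preqB) = \Z_k$ for $m \geq d_0$ and zero for $m < d_0$. Passing to direct and inverse limits is routine since the chain complexes are finitely generated in each grading.

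For the isomorphism statement, naturality of the spectral sequence applied to a compatible cofinal pair $\lambda_2 \in \calF^{\Z_k}_{ad}(\preqB[2]) \subset \calF^{\Z_k}_{ad}(\preqB[1])$ reduces the claim to a chain-level analysis: in gradings $m \geq d_0(R_1)$ the ``top-iterate'' Morse--Bott generators supplying the surviving equivariant classes are common to both complexes, and the chain-level inclusion identifies them, yielding the isomorphism on the abutment. The main obstacle arises when $[1/R_1] \neq [1/R_2]$: the two non-equivariant homologies are then concentrated in different gradings, so the non-equivariant inclusion map on $CH_q$ vanishes, and a purely $E^2$-level naturality argument is insufficient. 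One must instead track the chain-level inclusion through both the projective resolution and the monotonicity morphisms in the cofinal system, verifying that the equivariant extensions of the top-iterate class are transported coherently. This requires the explicit $\Z_k$-invariant Morse--Bott description of the chain complex developed in the proof of Theorem~\ref{thm:comp-nonequiv} in Section~\ref{sec:groups-preqB}.
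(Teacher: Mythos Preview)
For the first statement your spectral-sequence packaging is correct and coincides with what the paper does. The paper exhibits a cofinal family $\lambda_i$ whose chain complex $C^{(0,\epsilon)}(\lambda_i)$ is literally $0\to\Z_k\to 0$ concentrated in the single degree $d_0=-n-2n[1/R]$; for such a complex the hyperhomology spectral sequence is tautological and its abutment is $H_*(\Z_k;\Z_k)$ shifted by $d_0$, which is exactly your computation via the resolution of Remark~\ref{rem:equivHom}. (Triviality of the $T$-action on the lone generator is immediate geometrically---the orbit $\{0\}\times S^1$ is carried to itself by $\nu$---so the Fermat argument, while correct, is unnecessary.) One small slip: the $E^2$-page involves $CH_q^{(0,\epsilon)}(\lambda)$, not the double limit $CH_q(\preqB)$, so you cannot literally invoke Theorem~\ref{thm:comp-nonequiv}; you need the per-$\lambda$ statement for a cofinal family, which is what the \emph{proof} of Theorem~\ref{thm:comp-nonequiv} supplies.

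For the second statement there is a genuine gap in the case $[1/R_1]<[1/R_2]$. You correctly observe that the non-equivariant inclusion on $CH_*$ vanishes (source and target sit in different degrees), so $E^2$-naturality says nothing, and you correctly conclude that a chain-level argument is needed. But you then defer to ``the explicit $\Z_k$-invariant Morse--Bott description developed in the proof of Theorem~\ref{thm:comp-nonequiv}.'' The second statement of that theorem only treats $[1/R_1]=[1/R_2]$, and its proof (Part~(II)) needs no Morse--Bott spheres at all; the Morse--Bott analysis you are pointing to is precisely Part~(III), which exists solely to prove the clause you are trying to prove. So the deferral is circular. What is actually required: for $\preqB[2]$ one cannot avoid extra Reeb-orbit spheres $S_m$, $m_1<m\le m_2$, so $C^{\Z_k,(0,\epsilon)}(\lambda^H_i)$ is a genuine staircase complex. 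The paper's key step is a sandwich: rescale so that $\eta\widehat F_i\le\widehat H_i\le\widehat F_i$, observe that $\textrm{mon}\colon C^{\Z_k,(0,\epsilon_0)}(\eta\widehat F_i)\to C^{\Z_k,(0,\epsilon_0)}(\widehat F_i)$ is an isomorphism (scaling plus window enlargement), and hence by functoriality the chain-level $\textrm{mon}$ from $\widehat H_i$ to $\widehat F_i$ is an isomorphism in the single grading $-n-2nm_1$. Since $C^{\Z_k,(0,\epsilon)}(\widehat F_i)$ vanishes elsewhere, this determines the induced map on $C\otimes E$ row by row and yields the equivariant isomorphism in all gradings $\ge -n-2nm_1$. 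Your proposal names the difficulty but supplies none of this argument.
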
 
Broadly speaking, the basic
computations for $CH_*(\preqB)$ and $CH_*^{\Z_k}(\preqB)$
are similar in that - in both cases - we find
cofinal sequences of contact forms $\lambda$
such that the chain complex $(C(\lambda)^{(0,\epsilon)}_*, d)$ 
is quasi-isomorphic to
$0 \to \Z_k \to 0$
with non-trivial chain module in degree $m_0 = -n -2n[1/R]$. 
A major difference in the two cases, however, 
is that non-equivariant homology 
$H_m(0 \to \Z_k \to 0)$ 
is non-trivial 
iff $m = m_0$, while $\Z_k$-equivariant homology 
$H_m^{\Z_k}(0 \to \Z_k \to 0)$
is non-trivial
iff $m \geq m_0$.

Not only can we conclude 
from Theorem~\ref{thm:comp-equiv} that 
the morphism $CH_{*}^{\Z_k} (\preqB) \to CH_{*}^{\Z_k} (\widehat B(R_\dagger))$
induced by an inclusion $\preqB \subset \widehat B(R_\dagger)$
is an {isomorphism in all degrees} for which both 
$CH_{*}^{\Z_k} (\preqB)$ and $CH_{*}^{\Z_k} (\widehat B(R_\dagger))$ are non-trivial, 
a stronger, $\psi$-perturbed, result also holds: 
\begin{thm}\label{thm:psi-perturbed}
Given $\psi \in \text{Cont}(\R^{2n} \times S^1)$ 
and $R_\dagger, R > 1$ such that
$\psi( \widehat{B} (R) ) \subset \widehat{B}(R_\dagger))$, the inclusion morphism
$CH_{*}^{\Z_k} (\psi( \widehat{B} (R) )) \to CH_{*}^{\Z_k} (\widehat {B}(R_\dagger))$
is an {isomorphism in all degrees} for which both 
$CH_{*}^{\Z_k} (\widehat{B} (R))$ and $CH_{*}^{\Z_k} (\widehat{B} (R_\dagger) )$ are non-trivial.
\end{thm}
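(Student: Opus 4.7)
The plan is to sandwich $\widehat{B}(R_\dagger)$ inside a larger prequantized ball $\widehat{B}(R')$ that already contains the support of $\psi$, and to reduce the claim to the isomorphism $CH^{\Z_k}_*(\widehat{B}(R)) \xrightarrow{\cong} CH^{\Z_k}_*(\widehat{B}(R'))$ supplied by Theorem~\ref{thm:comp-equiv}, exploiting the $\calG$-functor naturality of $CH^{\Z_k}_*(-)$ established in Theorem~\ref{thm:g-functor}.

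Concretely, I would first choose $R' > \max(R, R_\dagger)$ large enough that the compact support of $\psi$ lies inside $\widehat{B}(R')$; this ensures $\psi(\widehat{B}(R')) = \widehat{B}(R')$. The chain of inclusions
$$\psi(\widehat{B}(R)) \;\subset\; \widehat{B}(R_\dagger) \;\subset\; \widehat{B}(R')$$
then induces, via the functor $CH^{\Z_k}_*(-)$, morphisms
$$CH^{\Z_k}_*(\psi(\widehat{B}(R))) \xrightarrow{\;\iota_*\;} CH^{\Z_k}_*(\widehat{B}(R_\dagger)) \xrightarrow{\;\iota'_*\;} CH^{\Z_k}_*(\widehat{B}(R')),$$
and Theorem~\ref{thm:comp-equiv}, applied with $R_\dagger, R' > 1$, gives that $\iota'_*$ is an isomorphism in all degrees $m \geq -n - 2n[1/R_\dagger] = -n$. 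Next I would apply the $\calG$-functor naturality of Theorem~\ref{thm:g-functor} to the inclusion $\widehat{B}(R) \subset \widehat{B}(R')$ under $\psi \in \GZKc$, producing the commutative square
$$
\begin{array}{ccc}
CH^{\Z_k}_*(\widehat{B}(R)) & \longrightarrow & CH^{\Z_k}_*(\widehat{B}(R')) \\
\psi_\sharp \downarrow & & \downarrow \psi_\sharp \\
CH^{\Z_k}_*(\psi(\widehat{B}(R))) & \xrightarrow{\iota'_* \circ \iota_*} & CH^{\Z_k}_*(\widehat{B}(R'))
\end{array}
$$
whose vertical maps are isomorphisms (the right-hand one is an automorphism because $\psi(\widehat{B}(R')) = \widehat{B}(R')$). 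The top edge is an isomorphism in degrees $m \geq -n$ by Theorem~\ref{thm:comp-equiv}, so the bottom composition $\iota'_* \circ \iota_*$ is as well, and combined with the already established isomorphism $\iota'_*$ a two-out-of-three argument yields that $\iota_*$ itself is an isomorphism in degrees $m \geq -n$.

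To conclude, Theorem~\ref{thm:comp-equiv} says $CH^{\Z_k}_*(\widehat{B}(R))$ and $CH^{\Z_k}_*(\widehat{B}(R_\dagger))$ are both non-trivial precisely in degrees $m \geq -n$ (since $R, R_\dagger > 1$), which matches the range in which $\iota_*$ has been shown to be an isomorphism. The sole substantive input is the computation Theorem~\ref{thm:comp-equiv}; the rest is formal naturality, so I do not anticipate a genuine obstacle beyond the implicit requirement that $\psi$ be taken in $\GZKc$ so that $\psi(\widehat{B}(R)) \in \mathscr{U}_k$ and the $\calG$-functor structure of Theorem~\ref{thm:g-functor} can be invoked.
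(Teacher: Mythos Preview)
Your argument is correct for the statement as literally written, and it is essentially the opening paragraph of the paper's own proof carried out in all degrees at once rather than only in degree $-n$: you sandwich with a large $\widehat B(R')$ containing the support of $\psi$, use the $\calG$-functor square from Theorem~\ref{thm:g-functor}, and invoke the second clause of Theorem~\ref{thm:comp-equiv} (whose isomorphism range is governed by the \emph{larger} ball, so $m\ge -n-2n[1/R']=-n$, not $[1/R_\dagger]$ as you wrote---a harmless slip here since both vanish). Under the hypothesis $R,R_\dagger>1$ this already finishes, so your route is shorter.

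The paper, by contrast, after obtaining the degree $-n$ isomorphism by this same sandwich, drops to the chain level: it identifies the explicit shape of the complexes $C^{\Z_k,(0,\epsilon)}_*(\psi_*\lambda^H_i)$ and $C^{\Z_k,(0,\epsilon)}_*(\lambda^{H'}_i)$ and applies the algebraic propagation Lemma~\ref{lem:zk-induction} to push the degree $-n$ isomorphism down through the $\calR$-pieces. The reason for this extra work is that the paper is in fact proving the theorem for $R,R_\dagger>1/k$ (the hypothesis ``$>1$'' in the statement is evidently a typo): in the application to Theorem~\ref{thm:cover} one takes $R=R_\dagger=R_2$ with $1/k<R_2<1/\ell\le 1$, and needs the isomorphism in degree $p=-n-2n\ell<-n$. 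Your direct sandwich argument cannot reach that range, because $R'$ must be taken large enough to contain $\mathrm{supp}(\psi)$, forcing $[1/R']=0$ and limiting Theorem~\ref{thm:comp-equiv} to $m\ge -n$. So your proof is a clean simplification of the stated result, but the paper's chain-level detour through Lemma~\ref{lem:zk-induction} is what actually delivers the version used downstream.
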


Given the $\calG$-functoriality of
$CH_*^{\Z_k}(-)$ for $\calG = \GZKc$ (Theorem~\ref{thm:g-functor}),
Theorems~\ref{thm:comp-equiv} and~\ref{thm:psi-perturbed} together imply Theorem~\ref{thm:cover}:

\begin{proof}[{\it of Theorem~\ref{thm:cover}}.]
Suppose $\psi \in \GZK$ squeezes $\preqB[1]$ into $\preqB[2]$ and $R_2 < 1/\ell < R_1$.
By Remark~\ref{rem:into-itself} we may without loss of generality assume
$R_2$ to be as close to $1/\ell$ as desired, in particular, assume $1/k < R_2$.
By hypothesis $[\frac 1 {R_1}] <\ell < [\frac 1 {R_2}]$.

We have $\psi(\preqB[2]) \subset \psi(\preqB[1]) \subset \preqB[2]$. By Theorem~\ref{thm:psi-perturbed} the
inclusion morphism $CH_{p, \Z_k}(\psi(\preqB[2]) ) \to CH_{p, \Z_k}(\preqB[2])$
is an isomorphism for $p$ in the grading range $-n \geq p \geq -n-2n[\frac 1 {R_2}]$. 

Let $p = -n-2n\ell$. Then
$CH_{p, \Z_k}(\preqB[1]) = 0$ by Theorem~\ref{thm:comp-equiv};
hence, $CH_{p, \Z_k}(\psi(\preqB[1])) = 0$ by Theorem~\ref{thm:g-functor}.
This is a contradiction, as the isomorphism
$CH_{p, \Z_k}(\psi(\preqB[2]) ) \to CH_{p, \Z_k}(\preqB[2])$
must, by Theorem~\ref{thm:g-functor}, factor through $CH_{p, \Z_k}(\psi(\preqB[1]))$.
\end{proof}

\begin{rem}[Comparison with the argument of \cite{ekp}] \label{rem:compareNS}
The proof just given for Theorem~\ref{thm:cover} is in the same general spirit as 
Eliashberg-Kim-Polterovich's proof that $\preqB[1]$ cannot be 
squeezed into $\preqB[2]$ for $R_2 < 1 < R_1$. Their argument
(generalized
in Proposition 1.26 of \cite{ekp}) uses the non-vanishing of an
inclusion morphism $CH_{-n}(\psi(\preqB[1])) \to CH_{-n}(\preqBB[R_\dagger])$ 
for large\footnote{They need
$R_\dagger > 1$ such that $\preqBB[R_\dagger]$ contains the support of
$\psi$.} 
$R_\dagger$ and $CH_{-n}(\preqB[2]) = 0$ both of which
are given when $R_2 < 1 < R_1$. 
This strategy does not work verbatim in our setting
because 
$R_2 < R_1$ implies 
$CH_m^{\Z_k}(R_1) \neq 0 \Rightarrow CH _m^{\Z_k}(\preqB[2]) \neq 0$ in all gradings $m \in \Z$.
A squeezing $\psi$ of $\preqB[1]$ into $\preqB[2]$ implies, however, multiple interleavings,
$\psi(\preqB[2]) \subset \psi(\preqB[1]) \subset \preqB[2] \subset \preqB[1]$,
so one is not confined to have the ``smaller $R_j$" be the
``monkey-in-the-middle":  we
use the first pair of inclusions, while \cite{ekp} use essentially the final pair.
As we will see in Section~\ref{sec:groups-preqB}, 
Theorem~\ref{thm:psi-perturbed} which played a key role in our proof is
also very much
related to the isomorphism
$CH_{-n}^{\Z_k}(\psi(\preqB[2])) \to CH_{-n}^{\Z_k}(\preqBB[R_\dagger])$
which holds for large $R_\dagger$.
\end{rem}

\subsection{Computations for $\preqB$}\label{sec:groups-preqB}

\begin{proof}[{\it of Theorems~\ref{thm:comp-nonequiv} and~\ref{thm:comp-equiv}.}]
To prove all parts of these Theorems (and later 
Theorem~\ref{thm:psi-perturbed} as well) we use similar Hamiltonians. 
We first describe these in detail then give arguments for: (I) the first statements of both Theorems, 
(II) monotonicity morphisms when $[1/R_1]=[1/R_2]$, and (III) 
monotonicity morphisms when $[1/R_1] < [1/R_2]$.

\medskip
Let $R>0$, assume $\epsilon > 0$ and fix
$\delta \in (0, 1)$.
Let $m_0 := [1/R]$ (so $1/(m_0+1)R < 1 < 1/m_0R$)
and consider a 1-dimensional family\footnote{These 
functions are similar to ones defined by \cite{milin} but we fix $\delta$ 
and make other simplifications.}
 of piecewise-linear functions $F := F_{c}$, $c \in (1, \infty)$
as shown in Fig.~\ref{fig:onereeb} such that 
the value $b \in (0, 1)$ is determined by $c$ as specified below, and
\begin{figure}[h]
\begin{center}
\includegraphics[width=7cm]{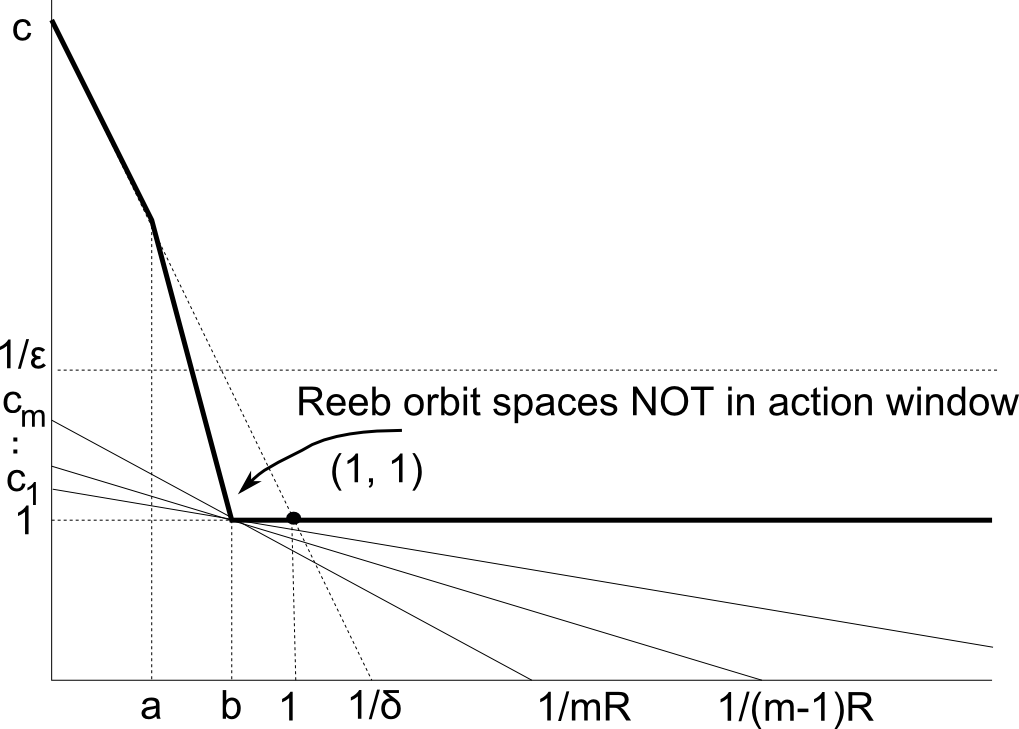} 
\end{center}
\caption{Graphs of Hamiltonians $F(u) := F_c(u)$ on $\preqB$ whose smoothings define 
contact forms $(dt - \alpha_L)/F$
with a single closed Reeb orbit at $\{0\} \times S^1$
of action $1/c$ and no other closed Reeb orbits with 
action in the window $(0, \epsilon)$. Here 
$m$ denotes $m_0 = [1/R]$ and
we assume $1< 1/\delta < 1/m_0R$ so there are no closed Reeb orbits
corresponding to tangencies at $(a, F(a))$. Orbits for $(b, F(b))$
on the other hand
do not contribute when $\epsilon$ is small since their action is
bounded below by $1/c_m$.} 
\label{fig:onereeb}
\end{figure}
$F$ consists of three linear pieces:
a left-most linear piece of slope $-c\delta$ which passes through $(0,c)$ and $(1/\delta, 0)$, 
a middle linear piece of twice that slope which passes through $(b, 1)$,
and a right-most piece, on the interval $(b, \infty)$ which is horizontal.
The
parameter $b$ is required to be an increasing function $b=f(c)$ of 
$c$ such that $\lim_{c \to \infty} f(c) = 1$,
and $f(c+1) < f(c) + 1/2(c+1)\delta$ for sufficiently large $c$.
For example $f(c) = 1 -1/c$ satisfies this inequality for $c > 2 \delta$.
A quick computation shows that the conditions on
the middle segment of $F_c$, namely its slope and choice of $b = f(c)$,
imply
$\frac 1 2 F_{c+1} \leq F_c$. Abusing notation, we define the Hamiltonian 
$F_c$ on $\R^{2n}$ to be the above function of the variable
$u = w/R$, for $w = \pi |z|^2$. This lifts
to an $S^1$-invariant Hamiltonian on $\R^{2n} \times S^1$
equal to $1$ outside
$\preqB$;
when clear from context
we will use the same name to refer to $F_c$ as a function of 
the real variable $u$ or as a Hamiltonian
on $\R^{2n}$ or $\R^{2n} \times S^1$. 
Put $\R_C := (C, \infty)$ 
for suitable $C > 2/\epsilon$ such that
$f(C) > 1/(m_0+1)R$. Then $f(c) > 1/(m_0+1)R$ for all
$c \in \R_C$ since the family $F_c$ is non-decreasing in $c$.
We consider 
the sequence $\{F_i\}_{i \in \N_{C}}$
where $\N_{C} := \N \cap \R_C$. 
Among Hamiltonians which are constantly equal to $1$ outside $\preqB$
this is a dominating (i.e., cofinal) set.

We will now
smooth each $F_i$ in small neighbourhoods
of its corners in such a way that the new tangent lines remain
``sufficiently close" to those of the original $F_i$ 
and the sequence of functions remains non-decreasing and dominating.
To make these conditions precise, 
assume the smoothing operator $A_\Delta$ has one parameter $\Delta> 0$ which
determines the fineness of the smoothing such that:
(1) by decreasing $\Delta$, $A_\Delta(F)$ becomes arbitrarily $C^0$-close to $F$,
(2) whenever $F$ is
linear on $(u - \Delta, u + \Delta)$ then
$A_\Delta(F)(u) = F(u)$, and 
(3) when 
$(u - \Delta, u + \Delta)$ contains a single corner of $F$,
$A_\Delta(F)(u) \geq F(u)$ with $A_\Delta(F)(u)$ convex in
the case $F$ is convex over the interval, and 
 $A_\Delta(F)(u) \leq F(u)$ with $A_\Delta(F)(u)$ concave 
 in the case of concativity. 
Consider $F :=F_i$ for some $i \in \N_{C}$.
For sufficiently small
$\Delta$, $F^A := A_\Delta(F)$ is a positive, non-increasing function
with $F^A(u)=c(1-\delta u)$ for small $u$ and $F^A(u) \equiv 1$
for $u \geq 1$. It is easily 
verified 
the contact form $\lambda_{F^A}  := (dt -\alpha_L)/F^A(u)$  
has one closed Reeb orbit
at $\{0\} \times S^1$ of action $1/F(0) = 1/c$ and 
spaces of closed Reeb orbits of action $1/c_m$ along 
spheres of constant $u$-value
such that $-(F^A)'(u)/(F^A(u) - u(F^A)'(u)) = mR$
for some $m \in \N \cup \{0\}$. We denote each such
$u$-value by $u_m$ and the sphere $\{u = u_m\}$ by $S_m$.
In this case, $c_m = -mR/(F^A)'(u_m)$. 
On the other hand, for small $\Delta$,
tangent lines to $F^A$ are well approximated by so-called
``generalized tangent lines" to the original piecewise linear
$F$: these are either true tangent lines at points $p$ of
linearity or else lines of slope $s \in [s_1, s_2]$ which pass
through a corner point $p$ where linear portions with 
slopes $s_1 < s_2$ meet.
As a result, one can read off the closed Reeb orbits of $\lambda_{F^A}$
and their approximate actions 
from the graph of the un-smoothed $F$ (c.f. Remark 5.2 \cite{milin}):
$F^A$ has one isolated orbit at $\{0\} \times S^1$
with action $1/c$ where $c$ is the vertical intercept of $F$,
and it has an $S^{2n-1}$-family of orbits 
with action arbitrarily close to $1/c_m$ corresponding to each generalized
tangent line to $F$ at $(b, F(b))$ that is horizontal ($m = 0$) or 
has horizontal intercept 
$1/mR > 0$ ($m = 1, 2, \ldots, m_0$) with $c_m$ being in either 
case the vertical intercept of the generalized tangent line, always less than 
$1/(1-m_0R)$.

If one imposes $1/\delta < 1/m_0R$ then 
(with $c > C$) this implies there are
no generalized tangent lines at $(a, F(a))$ with
horizontal intercept $1/mR$, $R \in \N$ and, thus,
no corresponding closed Reeb orbits (see Fig.~\ref{fig:onereeb}). 
Otherwise, in general,
there will be Reeb orbits corresponding
to $(a, F(a))$ and all have action greater than $c$ 
(see Fig.~\ref{fig:manyreeb}). Let $u = \overline u_m$ respectively
be the $u$-values where such tangencies to the smoothing occur, and
$S_m := \{u = \overline u_m\} \subset \R^{2n}$ the corresponding spheres.

We now 
require the $\Delta_i$ which defines a smoothing 
of $F_i$, $i \in \N_C$ 
to be sufficiently small that the true 
vertical intercepts of the tangent lines to the smoothing
have the same ordering and lie on the 
same side of $1/\epsilon$ as those of generalized tangent lines 
to $F_i$. Note we have fixed $\epsilon>0$ while
computing
$\varinjlim_{\lambda' \in \calF_{ad}(\preqB, \epsilon)} CH^{(0, \epsilon)}(\lambda')$. 
In addition $\Delta_i$ should
be sufficiently small that the $\Delta_i$-windows centred at corners of $F_i$ do not
overlap. 
Finally because $\{F_i\}_{i \in \N_{C}}$ is
non-decreasing we can take each $\Delta_i$ small enough 
that the smoothing $(F_i)^A := A_{\Delta_i}(F_i)$
will satisfy $(F_{i-1})^A \leq (F_i)^A \leq F_{i+1}$ 
and because $\frac 1 2 F_i \leq F_{i-1} \leq F_i$ we 
can further reduce $\Delta_i$ if necessary 
so that $\frac 1 2 (F_i)^A < (F_{i-1})^A \leq (F_i)^A$.
Inductively, the first condition implies
the sequence  $\{(F_i)^A\}_{i \in \N_{C}}$ is 
non-decreasing; the second condition will be used
to control monotonicity morphisms.

The above produces $SU(n)$-invariant Morse-Bott functions $(F_i)^A$
on $\R^{2n} = \C^n$ with
critical submanifolds $\{0\} \times S^1$ and the
spheres $S_m$ (if any).
We now perturb each $(F_i)^A$ following the Morse-Bott computational framework of Bourgeois 
\cite{bourgeois-toronto}
in order to obtain a Morse function $\widehat F_i$ on $\R^{2n}$ 
which is $\Z_k$-invariant near each $S_m$ (for a $\Z_k$-action to be specified) and
for which we know certain parts of the associated contact homology chain complex
$CH^{(0, \epsilon)}(\lambda_{\widehat F_i})$. 
The perturbing functions 
are constructed as follows.
Choose an arbitrarily $C^0$-small $\Z_k$-invariant Morse function
$g:S^1 \to \R$ with $k$ maxima $M_1, \ldots, M_{k-1}$
and $k$ minima  $m_1, \ldots, m_{k-1}$, $M_j = e^{2j\pi i/k}$
and $m_j = e^{(2j+1)\pi i/k}$, for the $\Z_k$-action generated
by rotation through $2\pi/k$. Now consider an arbitrarily $C^0$-small Morse function
$f: \CP^{n-1} \to \R$ with one critical point of each even index
$2j$, $j = 0, \ldots, (n-1)$ and its pullback $\pi^*f: S^{2n-1} \to \R$
via the Hopf bundle map $\pi:S^{2n-1} \to \CP^{n-1}$. This is 
a Morse-Bott function on $S^{2n-1}$ with critical submanifolds
which are isolated Hopf circles. Perturb $\pi^*f$ by adding 
to it a radially attenuated extension of $g$ supported
in a small 
tubular neighbourhood of each critical Hopf circle. This produces
a Morse function $h: S^{2n-1} \to \R$ with 
$k$ critical points each of index $2j$ and $2j+1$ for 
$j=0, \ldots, n-1$ and whose Morse complex
is 
\begin{diagram}
0&\rTo& \calR & \rTo^{p(T)} & \calR & \rTo^{\ (T-1)} & \calR 
&\rTo^{p(T)} & \cdots &\rTo^{\ (T-1)\ } &R & \rTo^{p(T)} 
& \ldots & \rTo^{(T-1)} & \calR &\rTo &0  
\end{diagram}
where each arrow is multiplication by the specified polynomial $(T-1)$ or
$p(T) = T^{k-1}+\ldots+T+1 \in \calR$. The function $h$ is $\Z_k$-invariant
for any\footnote{The reader comparing with 
Milin's argument should note that she uses only
$m = 1$; we need vary $m$ because our
$\Z_k$-action induces
on each $S_m$ a different $\Z_k$-action, namely that
generated by multiplication by $e^{2\pi i m/k}$.}
  $\Z_k$-action on $S^{2n-1} \subset \C^n$ generated by multiplication by
$e^{2\pi i m/k}$, $m \in U(k)$. Fix a particular choice of $m$ and
re-label the $2k$ critical points in every index if necessary so that 
permutation of
generators of the chain modules $\calR$ 
induced by
the chosen
$\Z_k$-action on $S^{2n-1}$
 is given by multiplication by $T$.
Then the above chain complex is
the $\Z_k$-equivariant Morse complex for $h$
for the chosen $\Z_k$-action on $S^{2n-1}$.
Let $\widehat F_i$ denote
the Morse function on $\R^{2n}$ which is
obtained by adding to $(F_i)^A$ a radially attenuated extension of $h$ supported
in a small tubular neighbourhood $\calN_m$ of each critical sphere $S_m$ of $(F_i)^A$.
On each $S_m \times S^1$ there are then
$2kn$ non-degenerate closed Reeb orbits for $\lambda_{\widehat F_i}$
which may be identified consistently with the $2kn$ critical points of $\widehat F_i$ on $S_m$.
The vertical shift contactomorphism $\nu$ on $\R^{2n} \times S^1$
moreover permutes these orbits in $S_m \times S^1$
 according to
multiplication by $e^{2\pi i m/k}$ of the corresponding 
critical points of $\widehat F_i$ in $S_m$. Since $[1/R] < k$,
the action is free, i.e., $m \in U(k)$ (this is important). As above, 
assume labeling of the $k$ critical points of each index on $S_m$
such that the associated permutation of these points as
generators $1, T, \ldots, T^{k-1}$ of the chain module $\calR$ 
 is given by multiplication by $T$.
On the small neighbourhood $\calN_m$ of each $S_m$, the function $\widehat F_i$
is a $\Z_k$-equivariant Morse function for the $\Z_k$-action 
given by multiplication by $e^{2\pi i m/k}$. By
 \cite{bourgeois-toronto} the Conley-Zehnder indices
of closed Reeb orbits for the contact form $\lambda_ {\widehat F_i}$ 
are determined by 
Morse indices of corresponding critical points of $\widehat F_i$ and the differentials in the
chain complex $(C^{(0, \epsilon)}, d)(\lambda_ {\widehat F_i})$ corresponding 
to concordances between critical points in a common sphere $S_m$ are
the same as the corresponding differentials
in the $\Z_k$-equivariant Morse complex of 
$\widehat F_i$. 
When $\epsilon>0$ is sufficiently small, so
that only closed Reeb orbits corresponding to $(0, F_i(0))$
and $(a, F_i(a))$ have action in the window $(0, \epsilon)$,
then letting $m_0 = [1/\delta]$
(which may be greater than $[1/R]$) and
$\ell \in \N \cup \{0\}$ be such that $m_0 + \ell = [1/R]$,
respective Conley-Zehnder indices of closed
Reeb orbits for $\widehat F_i$ are
$-n-2nm_0, 2n-2n(m_0+1), \ldots, 2n-2n(m_0+\ell)$ and
the chain complex $(C^{\Z_k, (0, \epsilon)}, d)(\lambda_ {\widehat F_i})$ is
therefore 
$0 \to \Z_k \to^{d_{m_0}} C[-n-2nm_0] \to^{d_{m_0+1}} C[-n-2n(m_0+1)] \to \cdots \to 
C[-n-2n(m_0+\ell)] \to 0$
where $C[j]$ denotes  a sub-complex 
which is the Morse complex for $h$
shifted in grading by $j$; only the $\Z_k$-equivariant 
differentials $d_{m+1}$ corresponding to concordances between 
Reeb orbits on adjacent spheres $S_m$ and $S_{m+1}$ are 
not immediately known. We assume once again
(as for the operator $A_\Delta$) that
the perturbations used to construct $\widehat F_i$
are suitably $C^0$-small and
carried out successively on $(F_i)^A$, $i \in \N_C$.

\medskip
(I) To simplify the notation put $\lambda_i := \lambda_{(F_i)^A}$.
These contact forms, which are $S^1$-invariant
hence $\Z_k$-invariant, constitute
a cofinal sequence not only for $\calF^{\Z_k}_{ad}(\preqB, \epsilon)$
but also for $\calF_{ad}(\preqB, \epsilon)$.
Moreover, each Morse-Bott function $(F_i)^A$ has only
one critical point, namely the origin, 
and so is already a $\Z_k$-invariant Morse function;
thus, $\widehat F_i = (F_i)^A$ and we retain the name $(F_i)^A$.
Since $\lambda_i$ has only the
closed Reeb orbit at $\{0\} \times S^1$ and this 
 has Conley-Zehnder index $-n$,
for all sufficiently small $\epsilon > 0$
the chain complex $(C^{(0, \epsilon)}, d)(\lambda_{i})$ 
for each $i \in \N_{C}$ is 
$0 \to \Z_k \to 0$ with non-trivial chain module in degree $-n$
 and hence
$CH_*^{(0, \epsilon)}(\lambda_{i})$ and
$CH_*^{\Z_k, (0, \epsilon)}(\lambda_{i})$ 
satisfy the formulae we want 
for $CH_*(\preqB)$, resp. $CH_*^{\Z_k}(\preqB)$.
It remains to check that monotonicity morphisms in both
equivariant and non-equivariant contact homology are isomorphisms
for chosen small $\epsilon > 0$; the above result for
specific $\lambda_{i}$ will then pass to the double-limit. We give the 
argument for the non-equivariant case to fix notation but the equivariant case is identical.
Fix $i \in \N_{C}$, and recall that
$\frac 1 2 (F_{i+1})^A \leq (F_{i})^A \leq (F_{i+1})^A$. 
To avoid excessive subscripts put $G = (F_{i+1})^A$. On the one hand,
there is a window enlargement isomorphism
$CH_*^{(0, \epsilon)}(\lambda_{\frac 1 2 G}) \xrightarrow{\cong} CH_*^{(0,  2\epsilon)}(\lambda_{\frac 1 2 G})$
because $\frac 1 2 G$ 
has no closed Reeb orbits with action in the window $[\epsilon, 2\epsilon]$ (recall $2/c < \epsilon$).
On the other hand, using scaling invariance, we also have an isomorphism
$CH_*^{(0, 2\epsilon)}(\lambda_{\frac 1 2 G}) = CH_*^{(0, 2\epsilon)}(2\lambda_{G}) \xrightarrow{\cong} CH_*^{(0, \epsilon)}(\lambda_{G})$. Composing these we obtain
the monotonicity morphism 
$\text{mon}: CH_*^{(0, \epsilon)}(\lambda_{\frac 1 2 G}) \to CH_*^{(0, \epsilon)}(\lambda_{G}) = 
CH_*^{(0, \epsilon)}(\lambda_{i+1})$
is an isomorphism. Since it factors through 
$\text{mon}: CH_*^{(0, \epsilon)}(\lambda_{i}) \to CH_*^{(0, \epsilon)}(\lambda_{i+1})$ and all
vector spaces $CH_m^{(0, \epsilon)}(\lambda_{i})$ are either trivial or $\Z_k$ this completes
the proof of the first statements in Theorems~\ref{thm:comp-nonequiv} and~\ref{thm:comp-equiv}.

\medskip
(II) To prove the second statement of Theorem~\ref{thm:comp-nonequiv} 
we take essentially the 
same sequence of Hamiltonians for both $\preqB[1]$ and $\preqB[2]$, however, as functions of their
respective canonical coordinates $u_1 := w/R_1$ and $u_2 := w/R_2$ where
$w := \pi |z|^2$.
Remark that
$u_1 = 1/mR_1$ $\Leftrightarrow$ $u_2 = 1/mR_2$ $\Leftrightarrow$
$w = 1/m$ and the condition $m_0 = [1/R_1] = [1/R_2]$ implies
$1/(m_0+1) < R_2 < R_1 < 1/m_0$ so 
$1/(m_0+1)R_j < 1 < 1/m_0R_j$ for both $j=1,2$.
Now let $\delta_0 > m_0$ but still less than $1/R_1 < 1/R_2$
and put $\delta_1 = \delta_0R_1$, $\delta_2 = \delta_0R_2$.
Then $1/\delta_j \in (1, 1/m_0R_j)$ for both $j=1, 2$. Define
Hamiltonians $F_c(u_1)$ for $\preqB[1]$ using $\delta = \delta_1$, 
and Hamiltonians $H_c(u_2)$ for $\preqB[2]$ using $\delta = \delta_2$. 
It follows that $H_c$ and $F_c$ both coincide
near $\{0\} \times S^1$ with the Hamiltonian $c(1-\delta_0 u)$.
On the other hand, assume the same function $f$ is chosen to define $b = f(c)$ 
in both families, $F_c$ and $H_c$. In the first case, 
$f$ gives values of $u_1$; in the second case of $u_2$. Since these are values
at which lower corners of the respective Hamiltonian occur
and $u = u_jR_j$, $j=1,2$ we see that $F_c$'s lower corner has
larger $u$-value than $H_c$'s and so $H_c \leq F_c$ for all $c$.
As in (I) we now consider only $c \in \R_C$ for suitable
$C > \epsilon/2$ such that $f(c) > 1/(m_0+1)R_2$. This 
implies $f(c) > 1/(m_0+1)R_1$ and if necessary we 
adjust\footnote{While $f$ takes values between $1/(m_0+1)R_2$ and $1$ 
we can re-parametrize it so that an integer value $N$ yields $f(N)$ arbitrarily
close to $1/(m_0+1)R_1 < R_2/R_1 <  1$ as desired.}
the definition of $f$ so that at least the minimal $N \in \N_C$ satisfies
$f(N) < R_2/R_1$. 
Now consider sequences $F_i, H_i$,
$i \in \N_C$ and assume smoothings are done
as above to yield $SU(n)$-invariant
Morse-Bott functions $(F_i)^A$ and $(H_i)^A$ respectively. 
These each have a single critical point and so are already
$\Z_k$-invariant Morse-functions.
Put $\lambda^F_i := \lambda_{(F_i)^A}$ and 
$\lambda^H_i := \lambda_{(H_i)^A}$.
By the same argument as
before, each $\lambda^F_i$ and each $\lambda^H_i$  
has the desired equivariant
and non-equivariant contact homology and these pass to the 
double limit. We claim that for fixed $\epsilon > 0$ sufficiently
small and $i \in \N_C$ sufficiently large the monotonicity morphism
induced by $\lambda^H_i \geq \lambda^F_i$ is an isomorphism, 
thus yielding the desired statement in the double limit. Indeed, 
$(F_N)^A \leq (H_i)^A$ for sufficiently large $i$ because 
$\{(H_i)^A\}_{i \in \N_C}$ is a dominating sequence for $\preqB[2]$
and $(F_N)^A(u_2)=1$
for all $u_2 \geq f(N)$ with $f(N) < R_2/R_1$
(recall $u_2 = R_2/R_1$ corresponds to $u = R_2$). 
At the same time
$\lambda^F_N \geq \lambda^F_i$ induces an isomorphism 
in contact homology for all
$i \in \N_C$ by the arguments above
so by functoriality and the fact that each 
$CH_m(\preqB)$ is either trivial or 1-dimensional, the
inclusion morphism $\iota_*: CH_m(\preqB[2]) \to CH_m(\preqB[1])$
is an isomorphism
in all gradings $m$, in particular, the grading $m = -n$ 
where both vector spaces are non-trivial. Note chain maps 
$C^{\Z_k, (0, \epsilon)}(\lambda^H_i) \otimes E \to C^{\Z_k, (0, \epsilon)}(\lambda^F_i) \otimes E$ 
induced by the $\Z_k$-equivariant chain maps 
$C^{\Z_k, (0, \epsilon)}(\lambda^H_i) \xrightarrow{\cong} C^{\Z_k, (0, \epsilon)}(\lambda^F_i)$ are then 
also isomorphisms in all gradings
so, taking the double limit, we obtain moreover the special case $[1/R_1] = [1/R_2]$
of the second statement of 
Theorem~\ref{thm:comp-equiv}.

\medskip
(III) Finally, to prove the second statement of Theorem~\ref{thm:comp-equiv} 
when $[1/R_1] < [1/R_2]$, consider
once again 
Hamiltonians as above for $\preqB[1]$ using $\delta = \delta_1$, 
and for $\preqB[2]$ using $\delta = \delta_2$, where
$\delta_1 /R_1 = \delta_0 = \delta_2/R_2$ for some choice of $\delta_0$.
Let $m_1= [1/R_1]$, $m_2 = [1/R_2]$.
\begin{figure}[h] 
\begin{center}
\includegraphics[height=5cm]{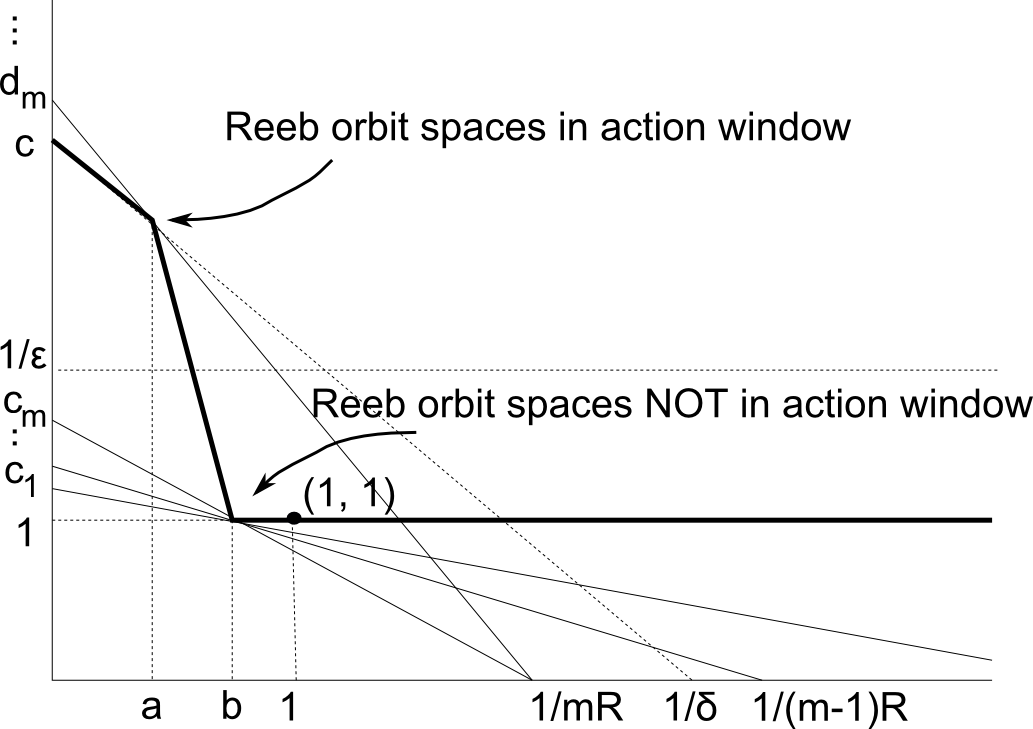} 
\end{center}
\caption{Graphs of Hamiltonians $H(u) := H_c(u)$ 
as in Fig.~\ref{fig:onereeb}
but where we allow $1/\delta > 1/m_0R$,
denoting $m_0 = [1/R]$ once again by $m$.
Smoothings of such $H$ define 
contact forms $\lambda = (dt - \alpha_L)/H$
which have not only the closed Reeb orbit $\{0\} \times S^1$
with action in the window $(0, \epsilon)$ as in Fig.~\ref{fig:onereeb},
but additionally
closed Reeb
orbits corresponding to generalized tangent lines at $(a, H(a))$;
unlike orbits for tangencies at $(b, H(b))$,
these orbits have action in the window $(0, \epsilon)$.
} \label{fig:manyreeb}
\end{figure} 
Now, unlike before, we can no longer simultaneously - for
$H_c$ and $F_c$ -
prevent generalized tangent lines at 
the upper corners which have horizontal intercept of the form $1/mR$. Indeed,
taking $\delta_0 > m_1$ (but less than $1/R_1$) and $C$ sufficiently large 
we prevent such lines for $F_c$ as in Fig.~\ref{fig:onereeb}, but then
we necessarily have 
$\delta_0 \not > m_2$ 
(because $1/R_1 < 1/R_2 < m_2$)
so such tangent lines will occur for $H_c$ as in Fig.~\ref{fig:manyreeb}.
We can guarantee, however, that their horizontal intercepts $1/mR_2$ occur only for $m$
such that
$m_1 < m < m_2$ by taking 
$\delta_0 > m_1$ and restricting
 to $c \in \R_C$ for some $C > \epsilon/2$ such that 
$f(c) > 1/(m_2+1)R_2$. Note, for $m_1 = m_2$ these conditions reduce to those
imposed in paragraph (II). 
As before, take sufficiently large $C$ and now
consider perturbed smoothings 
$\{\widehat F_i\}_{i \in \N_C}$ and $\{\widehat H_i\}_{i \in \N_C}$
which are $\Z_k$-invariant Morse functions. Note: unlike in (II),
the functions
$(H_i)^A$ were Morse-Bott with critical submanifolds $S_m$, $m_1 < m < m_2$. Note $m_2 < k$.
Now, fix $\epsilon > 0$ and take $i \in \N_C$ sufficiently large that 
$F_i(0) = H_i(0) > 1/\epsilon$.  
Let $\eta > 0$ be a scaling factor such that 
the line $y = \eta(1-\delta u)$ is tangent to and lies below $\widehat H_i$.
Then $\eta \widehat F_i \leq \widehat H_i$ and since $1/\delta_1 < 1/m_1R_1$,
the action of all Reeb orbits corresponding to
tangent lines to $\eta \widehat F_i$ at points near the
former ``lower corner" of $\widehat H_i$ will be strictly lower that
$\eta \widehat F_i(0)$. Let $\epsilon_0 < \eta \widehat F_i(0)$ be greater than
these actions. Note that 
$\textrm{mon}: C_*^{\Z_k, (0, \epsilon_0)}(\eta \widehat F_i) \to C_*^{\Z_k, (0, \epsilon_0)}(\widehat F_i)$
is then an isomorphism (by composing window enlargement 
and scaling invariance isomorphisms) and must factor through
$C_*^{\Z_k, (0, \epsilon_0)}(\widehat  H_i)$ by functoriality. This implies 
the monotonicity morphism
$\textrm{mon}: C_j^{\Z_k, (0, \epsilon_0)}(\widehat  H_i) \to C_j^{\Z_k, (0, \epsilon_0)}(\widehat  F_i)$
is an isomorphism in grading $j = -n - 2n m_1$ since all
three chain modules are $\Z_k$ in that grading. Note that
shrinking $\epsilon$ does not affect monotonicity morphisms
except as generators of the respective chain modules appear
or disappear. Since when we shrink $\epsilon \to 0$ no 
closed Reeb orbits of Conley-Zehnder index $-n-2nm_1$
appear or disappear in chain modules $C_j^{\Z_k, (0, \epsilon_0)}(\widehat  H_i)$
and 
$C_j^{\Z_k, (0, \epsilon_0)}(\widehat  F_i)$, $j = -n - 2n m_1$
we conclude that 
$\textrm{mon}: C_j^{\Z_k, (0, \epsilon)}(\widehat  H_i) \to C_j^{\Z_k, (0, \epsilon)}(\widehat  F_i)$
is an isomorphism 
too in this grading. 
Because $C_m^{\Z_k, (0, \epsilon)}(\widehat  F_i) = 0$ in all other gradings $m$,
$j$ is the only grading in which the chain map $\textrm{mon}$
is non-trivial and by reasoning as in (II), one checks that 
the induced chain map $C_*^{\Z_k, (0, \epsilon)}(\lambda^H_i) \otimes E 
\to C_*^{\Z_k, (0, \epsilon)}(\lambda^F_i) \otimes E$
consists of isomorphisms in row $j$ of the double complex 
$C_*^{\Z_k, (0, \epsilon)}(\lambda^H_i) \otimes E$ and vanishes in all other rows.
 This implies 
$\textrm{mon}: CH^{\Z_k, (0, \epsilon)}_m(\widehat  H_i) \to CH^{\Z_k, (0, \epsilon)}_m(\widehat  F_i)$
is an isomorphism in all gradings $m \geq -n-2nm_1$. 
Since this holds for all
$\epsilon > 0$ and $i$ sufficiently large it passes to 
the double limit, implying 
the inclusion morphism $\iota_*: CH^{\Z_k}_m(\preqB[2]) \to CH^{\Z_k}_m(\preqB[1])$
is an isomorphism for $m \geq -n-2nm_1$.
\end{proof}

To prove Theorem~\ref{thm:psi-perturbed} we use the following:
\begin{lem} \label{lem:zk-induction}
Assume a $\Z_k$-chain map
\begin{diagram}
0&\rTo& \Z_k & \rTo^{c_0^1p} & \calR & \rTo^{\ (T-1)} & \calR 
&\rTo^{c_1^1p} & \cdots &\rTo^{\ (T-1)\ } &R & \rTo^{c_{N}^1p} 
& \ldots & \rTo^{(T-1)} & \calR &\rTo &0 \\
&&\dTo{a_0}&&\dTo{a_1}&&\dTo{a_2}&&&&\dTo{a_{2N}}\\
0&\rTo& \Z_k & \rTo^{c_0^2p} & \calR & \rTo^{\ (T-1)} & \calR 
&\rTo^{c_1^2p} & \cdots &\rTo^{\ (T-1)} &\calR & \rTo & 0 
\end{diagram}
where each arrow is multiplication by the specified field element $a_0 \in \Z_k$, 
or polynomial $(T-1) \in \calR$, 
$a_j = a_j(T) \in \calR$ or
$c_i^j p \in \calR$ for $p = p(T) = T^{k-1}+\ldots+T+1 \in \calR$ and $c_i^j \in U(k)$, $N=nm_1$. 
{Then, if $a_0$ is a unit of $\Z_k$, all $a_j$, $j \in\{1,\ldots, 2N\}$ are units of $\calR$.}
Moreover, the conclusion also holds if the direction of the vertical arrows is reversed
but all other hypotheses are the same.
\end{lem}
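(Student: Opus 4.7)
My plan is to exploit the local-ring structure of $\calR = \Z_k[T]/(T^k - 1)$ that is available precisely because $k$ is prime. By the freshman's dream, $T^k - 1 = (T-1)^k$ in $\Z_k[T]$, so $\calR$ is a local Artinian ring with maximal ideal $\mathfrak{m} = (T-1)\calR$ and residue field $\Z_k$, and moreover $p(T) = (T-1)^{k-1}$ in $\calR$. Letting $\epsilon : \calR \to \Z_k$ denote the augmentation $T \mapsto 1$, the unit group of $\calR$ is exactly $\epsilon^{-1}(U(k))$. It therefore suffices to show $\epsilon(a_j) \in U(k)$ for every $j \in \{1, \ldots, 2N\}$.

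Next I would exploit the alternating form of the horizontal maps. Each ``$p$-square'' in the ladder yields the relation $(c_i^1 a_{2i+1} - c_i^2 a_{2i})\,p = 0$ in $\calR$; since $\mathrm{Ann}_\calR(p) = (T-1)\calR$, applying $\epsilon$ gives the scalar identity $c_i^1\,\epsilon(a_{2i+1}) = c_i^2\,\epsilon(a_{2i})$ in $\Z_k$. Each ``$(T-1)$-square'' yields $(a_{2i+2} - a_{2i+1})(T-1) = 0$; since $\mathrm{Ann}_\calR(T-1) = p\calR$ and $\epsilon(p) = k \equiv 0$, applying $\epsilon$ gives $\epsilon(a_{2i+2}) = \epsilon(a_{2i+1})$.

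The conclusion then follows by a left-to-right induction. Starting from the hypothesis $\epsilon(a_0) = a_0 \in U(k)$, each $p$-square forces $\epsilon(a_{2i+1}) = (c_i^1)^{-1} c_i^2\,\epsilon(a_{2i}) \in U(k)$ (well defined since $c_i^j \in U(k)$), and each $(T-1)$-square preserves unit-ness of the augmentation. Hence every $a_j$ is a unit of $\calR$. The reversed-arrow case is algebraically identical: with vertical arrows going up, commutativity of the squares gives the same relations with the roles of top and bottom rows swapped, and the same induction goes through verbatim.

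The only substantive technical input is the identification $p(T) = (T-1)^{k-1}$ in $\calR$ together with the resulting annihilator computations; once these are in place, the rest is a short diagram chase at the level of the residue field. The step where $k$ prime is genuinely used — and hence the one place the argument could have failed — is precisely the reduction of ``unit of $\calR$'' to ``nonzero augmentation''; without locality of $\calR$ one could not test unit-ness by a single scalar.
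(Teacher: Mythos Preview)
Your argument is correct and is essentially the same approach as the paper's: the paper's one-line proof invokes exactly the local-ring structure of $\calR$ with maximal ideal $(T-1)$, the fact that non-units lie in $(T-1)$, and that $(T-1)$ annihilates $p(T)$, and proceeds by induction on $j$. Your use of the augmentation $\epsilon$ is just a convenient repackaging of ``not in the maximal ideal $(T-1)$'', and your explicit annihilator computations $\mathrm{Ann}_\calR(p)=(T-1)\calR$, $\mathrm{Ann}_\calR(T-1)=p\calR$ make the induction step fully explicit where the paper leaves it to the reader.
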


This follows by 
induction\footnote{The author first encountered this observation
in Milin \cite{milin}.}
 on $j \in \N \cup \{0\}$ using the commutativity of 
$\calR$ and the fact that $(T-1)$, the maximal ideal of the local ring $\calR$, contains
all non-units and annihilates $p(T)$.

\begin{proof}[{\it of Theorem~\ref{thm:psi-perturbed}}.]
Fix $\epsilon > 0$.
Let $R_0 >  1$ be greater than 
both $R$ and $R_\dagger$ and
also sufficiently large that $\preqB[0]$
contains
the support of $\psi$. By Theorem~\ref{thm:comp-equiv}
the monotonicity morphism 
${\rm mon}: CH_{-n}(\preqB) \to CH_{-n}(\preqB[0])$
is an isomorphism. Therefore by functoriality (Theorem~\ref{thm:g-functor})
${\rm mon}: CH_{-n}(\psi(\preqB)) \to CH_{-n}(\preqB[0])$ is an isomorphism
and hence, using the fact these vector spaces 
are all equal to $\Z_k$, 
${\rm mon}: CH_{-n}(\psi(\preqB)) \to CH_{-n}(\widehat{B}(R_\dagger))$ is an
isomorphism too. 

Assume dominating sequences of smoothed,
perturbed Hamiltonians 
$\{\widehat{H}_i\}_{i \in \N}$, $\{\widehat{H'}_i\}_{i \in \N}$ and $\{\widehat{F}_i\}_{i \in \N}$
for $\preqB$, $\widehat{B}(R_\dagger)$ and $\preqB[0]$ respectively
 as in part (III) of the previous proof where now the role of $R_1$ is 
 played by $R_0$ for which $0 = [1/R_0]$.
 All contact forms 
 $\lambda^H_i$, $\lambda^{H'}_i$ and $\lambda^F_i$ therefore have
 a closed Reeb orbit at $\{0\} \times S^1$ with Conley-Zehnder index $-n$;
the forms $\lambda^F_i$ have only this orbit, while the forms 
 $\lambda^H_i$, $\lambda^{H'}_i$ have others as well. 
 More precisely, from the discussion
 of Morse-Bott computations in the previous proof (c.f. Bourgeois \cite{bourgeois-toronto})
 the chain complexes 
 $C_m^{\Z_k, (0, \epsilon)}(\lambda^ H_i)$ and $C_m^{\Z_k (0, \epsilon)}(\lambda^{H'}_i)$
 are of the form $0 \to \Z_k \to^{d_{0}} C[-n] \to^{d_{1}} C[-n-2n] \to \cdots \to^{d_{\ell}}
C[-n-2n\ell] \to 0$ for $\ell$ respectively given by $[1/R]$ or $[1/R_\dagger]$.
Moreover, by $\calG$-invariance, $\calG = \GZKc$, 
$C_m^{\Z_k, (0, \epsilon)}(\psi_*\lambda^H_i)$ is of the same form.
By comparison with the known non-equivariant contact homology of the
respective domains one deduces that for sufficiently large $i$, all maps $d_j$ are multiplication
by $c^j p(T) \in \calR$ for $p(T) = T^{k-1}+\ldots+T+1 \in \calR$ and $c^j \in U(k)$.

By shifting the indexing of the sequence $\{H_i'\}_{i \in \N_C}$ if necessary
we may assume that ${H'}_i \geq H_i \circ \psi$, i.e., $\lambda_i^{H'} \leq \psi_*\lambda_i^{H}$.
There is thus a corresponding induced $\Z_k$-equivariant chain map 
$\textrm{mon}: C_*^{\Z_k, (0, \epsilon)}(\psi_*\lambda_i^{H}) \to C_*^{\Z_k, (0, \epsilon)}(\lambda_i^{H'})$
which will be as shown in Lemma~\ref{lem:zk-induction}, but with all vertical arrows
possibly directed upwards.
Since $\textrm{mon}: CH_{-n}(\psi(\preqB)) \xrightarrow{\cong} CH_{-n}(\widehat{B}(R_\dagger))$
is an isomorphism and the vector spaces 
$C_m^{\Z_k, (0, \epsilon)}(\lambda_i^{H'})$ and $C_m^{\Z_k, (0, \epsilon)}(\psi_*\lambda_i^{H})$ 
are $\Z_k$ for all sufficiently large $i$
the chain map $\textrm{mon}$ in degree $-n$ must be an isomorphism for 
all sufficiently large $i$ and the result follows 
by applying Lemma~\ref{lem:zk-induction} and passing to the double limit.
\end{proof}

\section{Squeezing room and non-squeezing}\label{sec:room}

By using/extending two of the constructions in 
Eliashberg-Kim-Polterovich \cite{ekp} - namely, the map $F_N: \R^{2n} \times S^1 \to \R^{2n} \times S^1$ defined for $N \in \Z$ (see page 1649 of \cite{ekp}),
$$
(z, t) \mapsto (v(z)e^{2\pi N it}z, t), \hspace{.5cm}
v(z) = \frac{1}{\sqrt{1 + N\pi |z|^2}}
$$
and the squeezing given in Theorem 1.19 of \cite{ekp}
by a positive contractible loop of contactomorphisms
of the ideal contact boundary $P = S^{2n-1} \subset \R^{2n}$ - two observations follow.

First, that non-squeezing past $R = m/\ell > 1$
is equivalent to a ``squeezing room" requirement for
squeezing past $R = m/\kappa < 1$ which is in some cases stronger 
than what is proved in \cite{ekp}\footnote{Statements in this Section
for $R = m/\kappa$ are to be compared with statements in
\cite{ekp} for $R = m/k$; we have used the letter $\kappa$ instead of $k$ to
avoid confusion with notation of preceding 
Sections. Considering the effect of the map $F_b$, $\kappa$ should be thought of 
as $\ell + bm$ in this sentence.}.
Second, even this stronger requirement may not be tight.

To see the first observation, 
note that $F_b$ maps $\widehat B(R)$ into $\widehat B(R /(1 + bR))$
for all $R > 0$ and so in particular maps all of $\R^{2n} \times S^1$
into $\widehat B(1/b)$, taking $\widehat B(R)$ with $R \in [1, \infty)$
to $\widehat B(R')$ with 
$\R'\in [\frac 1 {b+1}, \frac 1 {b})$. Thus, 
Theorem~\ref{thm:main}, resp. \ref{thm:main-strong} is equivalent to
\begin{thm}\label{thm:main2}
When
\begin{align}
\frac 1 {b+1} < &\frac m \kappa < \frac 1 {b} \label{eq:sandwich}
\end{align}
there is no squeezing (resp. coarse squeezing)
of $\widehat B(m/\kappa)$ into itself within $\widehat B(1/b)$.
\end{thm}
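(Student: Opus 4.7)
The plan is to establish the equivalence asserted just above the theorem: Theorem~\ref{thm:main2} is equivalent to Theorem~\ref{thm:main} (resp.\ Theorem~\ref{thm:main-strong}), by conjugating with the contactomorphism $F_b$ of \cite{ekp}. Recall that for integer $b > 0$, the map $F_b$ is a contact bijection $\R^{2n} \times S^1 \to \widehat B(1/b)$ carrying $\widehat B(R)$ onto $\widehat B(R/(1+bR))$ for every $R > 0$. First I would record the arithmetic: solving $R/(1+bR) = m/\kappa$ gives $R = m/(\kappa - bm)$, and the sandwich condition \eqref{eq:sandwich} is exactly equivalent to $0 < \kappa - bm < m$, i.e.\ to $R > 1$ (with $R$ finite). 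Hence $F_b^{-1}(\widehat B(m/\kappa)) = \widehat B(R)$ with $R > 1$, placing the pre-image squarely in the regime handled by Theorem~\ref{thm:main-strong}.

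For the direction Theorem~\ref{thm:main-strong} $\Rightarrow$ Theorem~\ref{thm:main2}, I would suppose for contradiction that some $\psi \in \Gc$ coarsely squeezes $\widehat B(m/\kappa)$ into itself with $\mathrm{supp}(\psi) \subset \widehat B(1/b)$, and define $\tilde\psi := F_b^{-1} \circ \psi \circ F_b$. Because $F_b$ is a contact diffeomorphism onto $\widehat B(1/b)$ and $\psi$ is the identity outside a compact subset of $\widehat B(1/b)$, $\tilde\psi$ is well defined on all of $\R^{2n} \times S^1$ and equals the identity outside the compact set $F_b^{-1}(\mathrm{supp}(\psi))$; hence $\tilde\psi \in \Gc$. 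From $F_b(\widehat B(R)) = \widehat B(m/\kappa)$ one then reads off $\tilde\psi(\mathrm{Closure}(\widehat B(R))) \subset \widehat B(R)$, contradicting Theorem~\ref{thm:main-strong}. For the isotopy form, the same conjugation turns a compactly supported contact isotopy within $\widehat B(1/b)$ into a compactly supported contact isotopy of $\R^{2n} \times S^1$, yielding the conclusion for $\G$.

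The reverse direction is symmetric via $F_b$ rather than $F_b^{-1}$: given a coarse squeezing $\tilde\psi$ of $\widehat B(R)$ into itself with $R > 1$, set $\psi := F_b \circ \tilde\psi \circ F_b^{-1}$ on $\widehat B(1/b)$ and extend by the identity outside. Compact support of $\tilde\psi$ forces $\psi$ to coincide with the identity on a neighborhood of $\partial \widehat B(1/b)$, so the extension is smooth and contact, supported in $\widehat B(1/b)$ and squeezing $\widehat B(m/\kappa)$ into itself. I expect the only step requiring any attention is checking that compactness of support is preserved under conjugation by $F_b^{\pm 1}$; this is the sole mild obstacle, and it follows directly from $F_b$ being a diffeomorphism onto the bounded set $\widehat B(1/b)$. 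No deeper input is needed, and the entire proof reduces to the already-established Theorem~\ref{thm:main-strong}.
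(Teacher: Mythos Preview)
Your proposal is correct and follows exactly the approach the paper takes: the paper's entire argument for Theorem~\ref{thm:main2} is the sentence preceding it, namely that the contactomorphism $F_b$ of \cite{ekp} maps $\R^{2n}\times S^1$ onto $\widehat B(1/b)$ carrying $\widehat B(R)$ to $\widehat B(R/(1+bR))$, which sets up the equivalence with Theorem~\ref{thm:main-strong}. You have simply written out the conjugation and the arithmetic $R=m/(\kappa-bm)>1\Leftrightarrow\eqref{eq:sandwich}$ in detail, which the paper leaves implicit.
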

In this sense, rigidity at large scale 
which completely precludes squeezing can be thought of as an infinite squeezing room 
requirement, and is equivalent to a form of rigidity at small-scale which requires
squeezing room determined
by the reciprocal integers. With this viewpoint, instead of a
single cut-off between flexibility and rigidity of $\preqB$ as $R$ grows, one sees rather a 
squeezing room requirement which jumps at each reciprocal integer, culminating in 
an infinite requirement when $R=1$ is passed.

Theorem 1.5 of \cite{ekp} established that there 
is no squeezing of $\widehat B(m/\kappa)$ into itself within 
$\widehat B(m/(\kappa-1))$ but this bound depends on the particular $m/\kappa$.
If equation \eqref{eq:sandwich}
holds and $m/\kappa$ is not too close to $1/b$ - more precisely, if
\begin{align}
\frac 1 {b+1} < &\frac m \kappa < \frac 1 {b + 1/m} \label{eq:strongsand}
\end{align}
then we will have $mb + 1 < \kappa$, i.e.,
\begin{align*}
&\frac m {\kappa-1} < \frac 1 b
\end{align*}
so the squeezing room requirement of Theorem~\ref{thm:main2}
will (for such $m/\kappa$) be strictly stronger than that of Theorem 1.5 of \cite{ekp}.

We now remark that even
this stronger squeezing room requirement is not necessarily tight.
If one applies Theorem 1.19 of \cite{ekp} 
to $\widehat B(m/\kappa)$,  $m < \kappa$ as done in Remark 1.23 of \cite{ekp} 
to $\widehat B(1/\kappa)$, one obtains the more general:
\begin{prop}
There is a squeezing
of $\widehat B(m/\kappa)$ into itself within an arbitrarily small neighborhood
of $\widehat B(m/(\kappa-m))$. 
\end{prop}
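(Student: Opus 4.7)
The plan is to apply Theorem~1.19 of \cite{ekp} to $\widehat B(m/\kappa)$, following verbatim the construction used in Remark~1.23 of that paper for the special case $m=1$. Theorem~1.19 uses a positive contractible loop of contactomorphisms of the ideal contact boundary $P = S^{2n-1}$, with some strictly positive contact Hamiltonian $h_t$, to produce for each scale $R$ in the admissible range a compactly supported contactomorphism of $\R^{2n} \times S^1$ squeezing $\widehat B(R)$ into itself. Remark~1.23 applies this for $R = 1/\kappa$ and tracks the support to be contained in an arbitrarily small neighborhood of $\widehat B(1/(\kappa-1))$; the key computation is that the radial coordinate $w = \pi|z|^2$ along a trajectory starting at $w = R$ attains its maximum value $R/(1-R)$ before returning inside $\widehat B(R)$, and one has $R/(1-R) = 1/(\kappa-1)$ when $R = 1/\kappa$.

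The steps are then as follows. First, I would review the construction of the compactly supported contactomorphism in Theorem~1.19, integrating the lifted positive contractible loop on the symplectization and descending to $\R^{2n} \times S^1$ to obtain a compactly supported squeezing of $\widehat B(R)$ for $R < 1$. Second, I would repeat the support-tracking of Remark~1.23, now with $R = m/\kappa$ substituted throughout: the integrated flow moves $\partial \widehat B(m/\kappa)$ along trajectories whose maximum $w$-value is $R/(1-R) = m/(\kappa-m)$, and smoothing/truncating near the turning points shrinks the support into an arbitrarily small neighborhood of $\widehat B(m/(\kappa-m))$.

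The main obstacle, which I expect to be mild, is confirming that the support-tracking argument from Remark~1.23 proceeds identically for arbitrary $R \in (0,1)$ and not only for $R$ of reciprocal-integer form. Since the formula $R \mapsto R/(1-R)$ is continuous and the compactly supported contactomorphism built from the positive contractible loop depends smoothly on the scaling parameter, no additional input should be required beyond re-examining the explicit form of the Hamiltonian trajectories in the construction of \cite{ekp} and verifying that the support bound indeed depends only on $R$ and not on any integrality of $1/R$.
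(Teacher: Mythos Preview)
Your proposal is correct and matches the paper's own argument essentially verbatim: the paper also derives the Proposition by applying Theorem~1.19 of \cite{ekp} to $\widehat B(m/\kappa)$ exactly as Remark~1.23 of \cite{ekp} does for $\widehat B(1/\kappa)$, with the support bound coming from the same computation $R/(1-R) = m/(\kappa-m)$ for $R = m/\kappa$.
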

However,
\eqref{eq:sandwich} implies $m/(\kappa-m) > 1/b$ so there 
remains a gap between the required squeezing room
and that of known squeezings. To find squeezings with
smaller support it seems 
methods
beyond the construction of \cite{ekp} would be needed,
to produce either ``wilder" compactly supported 
contact isotopies which do not deform the original fiberwise
star-shaped domain through fiberwise star-shaped domains, or 
compactly supported contactomorphisms not isotopic to 
the identity.

\paragraph{Future work}
Besides closing the gap mentioned above 
another direction for future work is application of
the framework of the present paper to prequantizations $M \times S^1$ 
of other Liouville manifolds $M$. 
When $M$ is a sufficiently stabilized Liouville manifold, squeezing at 
small scale has already been established by Eliashberg-Kim-Polterovich
(c.f. Theorems 1.16, 1.19 in \cite{ekp} of which Theorem 1.1 
in our paper re-states the 
special case $M = \R^{2n}$, $n \geq 2$).
We conjecture that coarse non-squeezing
at large scale holds as well in these settings.

\paragraph{Acknowledgements}
I would like to express my deep gratitude to
Leonid Polterovich for his mathematical guidance during 
several semesters of my studies (in Computer Science)
at the University of Chicago, and for introducing me to 
the contact non-squeezing problem.
None of the work in this paper would have started without the 
many discussions with him and this work became
something of a second PhD for me, bringing me back into Mathematics.
In particular the idea to tackle this problem 
by developing a $\Z_k$-equivariant form of contact homology
is due to him. I owe 
a second
debt of gratitude to Isidora Milin whose PhD thesis on
orderability of lens spaces was an invaluable 
resource to me in developing the $\Z_k$-equivariant theory 
in this paper and 
also revealed enlightening similarities between 
the seemingly unrelated 
non-squeezing phenomena considered in her setting and ours. I also thank
Fr\'ed\'eric Bourgeois for very helpful technical discusssions.
Finally I thank Yasha Eliashberg.
His constant support and encouragement
kept this project going despite countless delays.
A portion of the writing of
this work was completed while visiting Tel Aviv University in May 2014 and the Universit\'e de
Lyon in June-July 2014. I
thank both Leonid Polterovich and Jean-Yves Welschinger for their hospitality
and for stimulating discussions. 
Those visits were partially supported by TAU and the CNRS respectively.


\begin{thebibliography}{0}

\bibitem
{bourgeois-toronto} F. Bourgeois, {\it A Morse-Bott approach to contact homology}. In ``Symplectic and contact topology: interactions and perspectives", eds. Y. Elashberg, B. Khesin, F. Lalonde, Fields Inst. Commun. {\bf 35}, Amer. Math. Soc.,2003, pp. 55-77.

\bibitem
{bourgeois-summer} F. Bourgeois, {\it Introduction to Contact Homology}. Summer School in Berder: Holomorphic curves and contact topology, 2003.

\bibitem
{bourgeois} F. Bourgeois, {\it Contact homology and homotopy groups of
the space of contact structures}. Math. Res. Lett. {\bf 13}(1) (2006), 71-85.

\bibitem
{beh3} F. Bourgeois, Y. Eliashberg, H. Hofer, K. Wysocki, and E. Zehnder, {\it
Compactness results in symplectic field theory}, Geom. Topol. {\bf 7} (2003), 799-888.

\bibitem
{fred-klaus} F. Bourgeois, K. Mohnke, {\it Coherent orientations in symplectic field theory}. Math. Z. {\bf 248}(1) (2004), 123-146.

\bibitem
{bourg-oanc07} F. Bourgeois, A. Oancea, {\it Symplectic homology, autonomous Hamiltonians, and Morse-Bott moduli spaces}.
{\tt arXiv:0704.1039 [math.SG]} (2007).


\bibitem
{chiu} S. Chiu, {\it Non-squeezing properties of contact balls}. 
{\tt arXiv:1405.1178 [math.SG]} (2014).

\bibitem
{dragnev} D. Dragnev, {\it Fredholm theory and transversality for noncompact pseudoholomorphic maps in symplectizations}. Comm. Pure Appl. Math. {\bf 57} (2004), 726-763.


\bibitem
{egh} Y. Eliashberg, A. Givental, and H. Hofer, {\it Introduction to symplectic field theory}. GAFA
2000. In ``{Visions in Mathematics}", eds. N. Alon, J. Bourgain, A. Connes, M. Gromov, V. Milman,
Birkh\"auser, 2000, pp. 560-673.

\bibitem
{elpolterov} Y. Eliashberg, L. Polterovich, {\it Partially ordered groups and geometry of contact transformations}. GAFA {\bf 10} (2000), 1448-1476.

\bibitem
{ekp} Y. Eliashberg, S. S. Kim, L. Polterovich, {\it Geometry of contact transformations and domains: orderability
versus squeezing}.  Geom. Topol. {\bf 10} (2006), 1635-1747.


\bibitem
{gromov} M. Gromov, {\it Pseudoholomorphic curves in symplectic manifolds}. Invent. Math. {\bf 82} (1985), 307-347.

\bibitem
{hwz} H. Hofer, K. Wysocki, E. Zehnder,
{\it Properties of pseudoholomorphic curves in symplec-
tizations. III Fredholm theory}. In
``Topics in nonlinear analysis", Progr. Nonlinear
Differential Equations Appl. {\bf 35,} Birkh\"auser, 1999, pp. 381-475.

\bibitem
{jo} J. Nelson, {\it Automatic transversality in contact homology I: regularity}
{\tt arXiv:1407.3993v2 [math.SG]} (2015).

\bibitem
{kim} S. S. Kim, {\it Relative contact homology and contact nonsqueezing},
Ph.D. thesis, Stanford University, 2001.

\bibitem
{milin} I. Milin, {\it Orderability and Non-squeezing in Contact Geometry}. Ph.D. thesis, Stanford University, 2008.

\bibitem
{robbinsalamon} J. Robbin and D. Salamon, {\it The Maslov index for paths}. Topol.
{\bf 32}(4) (1993), 827-844.

\bibitem
{tamarkin} D. Tamarkin, personal communication and talk, Geometry Seminar, University of Chicago, 2010.


\end{thebibliography}
\end{document}